\documentclass[11pt, a4paper, reqno]{amsart}

\usepackage[mathscr]{eucal}


\usepackage{tikz}
\newcommand*{\DashedArrow}[1][]{\mathbin{\tikz [baseline=-0.25ex,-latex, dashed,#1] \draw [#1] (0pt,0.5ex) -- (1.3em,0.5ex);}}%

\usepackage{mathrsfs}
\usepackage[all]{xy}
\usepackage{graphicx}
\usepackage{amssymb}
\usepackage{color}

\usepackage{amsmath}
\swapnumbers

\usepackage{hyperref}
\hypersetup{
    colorlinks=true, 
    linkcolor=blue, 
    urlcolor=red, 
}

\usepackage{enumerate}
\usepackage{enumitem}

\usepackage{geometry}                
\geometry{a4paper}                   

\usepackage{verbatim}

\newfont{\sheaf}{eusm10 scaled\magstep1}

\linespread{1.1}



\def\N{\ensuremath{\mathbb N}}
\def\Z{\ensuremath{\mathbb Z}}
\def\Q{\ensuremath{\mathbb Q}}

\def\C{\ensuremath{\mathbb C}}

\def\P{\ensuremath{\mathbb P}}
\def\S{\ensuremath{\mathbb S}}

\def\cO{\ensuremath{\mathcal O}}
\def\cF{\ensuremath{\mathcal F}}

\def\cJ{\ensuremath{\mathcal J}}
\def\cI{\ensuremath{\mathcal I}}
\def\\L{\ensuremath{\mathcal L}}

\def\L{\ensuremath{\mathbf L}}

\def\d{\mathbf {d}}

\def\F{\mathbf {F}}
\def \lam{\mathbf {\lambda}}

\def\im{\operatorname{im}}
\def\rank{\operatorname{rank}}

\def\rank{\operatorname{rank}}

\newtheorem{THM}{Theorem}[section]


\newtheorem{theorem}{Theorem}
\newtheorem{proposition}[theorem]{Proposition}

\newtheorem{lemma}[theorem]{Lemma}
\newtheorem{definition}[theorem]{Definition}
\newtheorem{remark}[theorem]{Remark}
\newtheorem{corollary}[theorem]{Corollary}


\numberwithin{equation}{section}



\begin{document}


\title  [Stability of  logarithmic differential one-forms.]    {Stability of  logarithmic differential one-forms.}

\author[Fernando Cukierman, Javier Gargiulo Acea and C\'esar Massri.]{Fernando Cukierman, \\\\ Javier Gargiulo Acea, \\\\ C\'esar Massri.}


\begin{abstract}
This article deals with the irreducible components of the space of codimension one foliations in a projective space  defined by logarithmic forms of a certain degree. We study the geometry of the natural parametrization of the logarithmic components and we give a new proof of the stability of logarithmic foliations, obtaining also that these irreducible components are reduced.
\end{abstract}


\subjclass[2010]{14Mxx, 37F75, 32S65, 32G13.}

\maketitle

\tableofcontents

\newpage

\noindent
\section{Introduction.}   \label{introduction}
We consider differential one-forms of logarithmic type
$\omega = F \ \sum_{i=1}^m \lambda_i  \  dF_i /F_i$   where, for $i = 1, \dots, m$, $F_i$ is a  homogeneous polynomial of a fixed degree $d_i$ in variables $x_0, \dots, x_n$,  with complex coefficients, $F= \prod_{j}  F_j$, and $\lambda_i$ are complex numbers such that $\sum_{i} d_i \lambda_i = 0$. 
Such an $\omega$ defines a global section of $\Omega_{\P^n}^1(d)$ for $d = \sum_{i} d_i$. Also, $\omega$ satisfies the Frobenius integrability condition $\omega \wedge d\omega = 0$. 

Fixing $\d = (m; d_1, \dots, d_m)$ denote $L_n(\d) \subset H^0(\P^n, \Omega^1_{\P^n}(d))$ the collection of all such logarithmic one-forms and
$\\L_n(\d) \subset \P H^0(\P^n, \Omega^1_{\P^n}(d)) = \P^N$ the corresponding closed projective variety. It is easy to see that $\\L_n(\d)$ is an irreducible algebraic variety.
Also, $\\L_n(\d)$ is contained in the subvariety $\mathcal F_n(d) \subset  \P^N$ of integrable one-forms of degree $d$. Here the motivating problem is to describe the irreducible components of $\mathcal F_n(d)$.

It was proved by Omegar Calvo in \cite{Omegar} that, for any $\d$, the variety of logarithmic forms  $\\L_n(\d)$ is an irreducible component of the moduli space $\mathcal F_n(d)$  of codimension one algebraic foliations of degree $d$ in $\P^n(\C)$. In other words, the logarithmic one-forms enjoy a stability condition among integrable forms. Actually, the results of \cite{Omegar} hold for more general ambient varieties than projective spaces.  

In this article we will provide another proof of O. Calvo's theorem, in case the ambient space is a complex projective space. 
Our strategy will be to calculate the tangent space $T(\omega)$ of $\mathcal F_n(d)$ at a general point $\omega \in \\L_n(\d)$.
The main results are stated in Theorems \ref{main} and \ref{main2}.

This method is completely algebraic and provides further information, especially the fact that $\mathcal F_n(d)$ results \emph{generically reduced}
along the irreducible component $\\L_n(\d)$.

The logarithmic components are the closure of the image of a multilinear map $\rho$, defined in Section 4, from a product of projective spaces into a projective space. We describe the base locus of $\rho$ in Section 5, and study its generic injectivity in Section 6. Our proof requires a detailed analysis of the derivative of $\rho$, started in Section 7.  Another important ingredient is the resolution of the ideal of various strata of the singular scheme of a logarithmic form; this is carried out in Section 8. The end of the proof is achieved in Section 9, where we distinguish two cases, depending on whether or not $\d$ is balanced.

\medskip

\medskip

\thanks{We thank Jorge Vit\'orio Pereira, Ariel Molinuevo and Federico Quallbrunn for several conversations at various stages of this work.}

\newpage

\noindent

\section{Notation.}    \label{notation}

We shall use the following notations:

\begin{flushleft}

\medskip

$\C^{n+1}$ = complex affine space of dimension $n+1$.

\medskip

$\P^n$ = complex projective space of dimension $n$.

\medskip

$S_n = \C[x_0, \dots, x_n]$ = graded ring of polynomials with complex coefficients in $n+1$ variables.

When $n$ is understood we denote $S_n = S$.

\medskip

$S_n(d)$ = homogeneous elements of degree $d$ in $S_n$.

When $n$ is understood we denote $S_n(d) = S(d)$.

Recall that one has $S_n(d) = H^0(\P^n, \mathcal O_{\P^n}(d))$.

\medskip

$\Omega^q_X$ = sheaf of algebraic differential $q$-forms on an algebraic variety $X$.

$\Omega^q(X)$ = the set of rational $q$-forms on $X$ (with $X$ an irreducible variety).

It is a vector space over the field $\C(X)$ of rational functions of $X$.

\medskip

$\Omega^q_n = H^0(\C^{n+1}, \Omega^q_{\C^{n+1}})$.

A typical element of $\Omega^1_n$ is $\omega = \sum_{i=0}^n a_i \ dx_i$ with $a_i \in S_n$.

More generally, a typical element of $\Omega^q_n$ may be written in the usual way as 
\linebreak
$\sum_{|J|=q} a_J \ dx_J$ with $a_J \in S_n$ and $dx_J = dx_{j_1} \wedge \dots \wedge dx_{j_q}$
where $J =\{j_1, \dots, j_q\}$ with $j_1 < \dots < j_q$.

When $n$ is understood we denote $\Omega^q_n = \Omega^q$.

$\Omega^q_n$ is a graded $S_n$-module with homogeneous piece of degree $d$ defined by

$\Omega^q_n(d) = \{\sum_{|J|=q} a_J \ dx_J, \ a_J \in S_n(d-q)$\}.

In particular, $dx_i$ is homogeneous of degree one.

The exterior derivative is an operator of degree zero, i. e. it preserves degree. 

\medskip

$H^0(\P^n, \Omega^1_{\P^n}(d))$ = projective one-forms of degree $d$.

It follows from the Euler exact sequence that $ \omega = \sum_{i} a_i dx_i \in \Omega^1_n(d)$ is projective if and only if it contracts to zero with the Euler or radial vector field $R =  \sum_{i=0}^n x_i \frac{\partial}{\partial x_i}$, that is, if $\sum_{i} a_i x_i = 0$.

\medskip

$\P^n(d) = \P(H^0(\P^n, \Omega^1_{\P^n}(d)))$.

\medskip

$F_n(d) = \{ \omega \in H^0(\P^n, \Omega^1_{\P^n}(d))  /  \omega \wedge d \omega = 0\}$ = 
the set of integrable projective one-forms in $\P^n$  of degree $d$, and 

$\mathcal F_n(d) \subset \P^n(d)$ the projectivization of $F_n(d)$.

\medskip

$\P^n(\d) = \P \Lambda(\d) \times \prod_{i=1}^m \P S_n(d_i)$.

\end{flushleft}

\medskip

\medskip

\newpage

\section{Logarithmic one-forms.}    \label{logarithmic}

\begin{definition} \label{definition0}
Fix natural numbers $n, d$ and  $m$. Let
$$\d = (m; d_1, \dots, d_m)$$
be a partition of $d$ into $m$ parts, that is, for $i = 1, \dots, m$ each $d_i$ is a natural number and $\sum_{i=1}^m d_i =d$. 
Let us normalize so that $d_i \ge d_{i+1}$ for all $i < m$.
We denote 
$$P(m, d)$$ 
the set of all such partitions of $d$ into $m$ parts.
\end{definition}

\medskip

\begin{definition} \label{definition1}
Fix $\d = (m; d_1, \dots, d_m) \in P(m, d)$. A differential one-form $\omega \in \Omega^1_n$ is \emph{logarithmic of type} $\d$ if 
$$\omega = (\prod_{j=1}^m  F_j)   \sum_{i=1}^m \lambda_i  \  dF_i /F_i = 
 \sum_{i=1}^m \lambda_i   \ (\prod_{j \ne i}  F_j)  \  dF_i $$
where $F_i \in S_n(d_i)$ is a non-zero homogeneous polynomial of degree $d_i$ and the $\lambda_i$ are complex numbers.
\end{definition}

\begin{definition} \label{definition notation}
It will be convenient to use the following notation. For 
$\d$ and $F_i \in S_n(d_i)$  as above, 
$$\F = (F_1, \dots, F_m), \ \ \ \ F = \prod_{j=1}^m F_j,$$
$$ \hat{F}_i =  \prod_{j \ne i}  F_j = F / F_i, \ \ \ \  
\hat{F}_{i j} =  \prod_{k \ne i, k \ne j}  F_k = F / F_i F_j, \ (i \ne j),$$ 
or, more generally, for a subset $A \subset \{1, \dots, m\}$ we write
$$\hat{F}_A =  \prod_{j \notin A} F_j$$
Hence a logarithmic one-form may be written
\begin{equation}
\omega = F \  \sum_{i=1}^m \lambda_i   \  dF_i /F_i = 
 \sum_{i=1}^m \lambda_i  \  \hat{F}_i  \  dF_i . \label{log}
 \end{equation}
 
 \noindent
 We denote $\hat d_i =  \sum_{j \ne i} d_j$ the degree of $\hat F_i$ and, more generally,
 $\hat d_A =   \sum_{j \notin A} d_j$   the degree of $ \hat{F}_A $.
\end{definition}

\begin{proposition} \label{proposition1} For $\omega$ a logarithmic one-form as above,

a) $\omega$ is homogeneous of degree $d=\sum_{i=1}^m d_i$. 

b) $\omega$ is integrable.

c) $ <R, \omega> = (\sum_{i=1}^m d_i \lambda_i) F $. In particular, $\omega$ is projective if and only if 
$$\sum_{i=1}^m d_i \lambda_i = 0.$$
\end{proposition}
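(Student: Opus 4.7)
The plan is to handle the three assertions in turn; each reduces to a short manipulation, with part (b) requiring the most care.

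For (a), I would just count degrees. Since $F_i$ is homogeneous of degree $d_i$, its partial derivatives have degree $d_i - 1$, so $dF_i$ is homogeneous of degree $d_i$ in $\Omega^1_n$ (each $dx_j$ carrying degree one by the convention fixed in Section~\ref{notation}). The product $\hat F_i = F/F_i$ is homogeneous of degree $d - d_i$, so each summand $\lambda_i \hat F_i \, dF_i$ of \eqref{log} has degree $d$.

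For (b), my strategy is to exploit the multiplicative decomposition $\omega = F\eta$, where $\eta = \sum_{i=1}^m \lambda_i \, dF_i/F_i$ is a rational one-form on the open set $U = \{F \ne 0\}$. Each $dF_i/F_i$ is closed, so $d\eta = 0$ on $U$, and therefore $d\omega = dF \wedge \eta + F\, d\eta = dF \wedge \eta$ on $U$. Consequently
\begin{equation*}
\omega \wedge d\omega \;=\; F\eta \wedge dF \wedge \eta \;=\; -F\, dF \wedge (\eta \wedge \eta) \;=\; 0
\end{equation*}
as rational three-forms on $\C^{n+1}$, using that $\eta \wedge \eta = 0$ because $\eta$ has odd degree. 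Since $\omega \wedge d\omega$ is a polynomial three-form that vanishes on the dense open set $U$, it vanishes identically on $\C^{n+1}$.

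For (c), Euler's identity $\langle R, dF_i \rangle = d_i F_i$ applied termwise gives
\begin{equation*}
\langle R, \omega \rangle \;=\; \sum_{i=1}^m \lambda_i \hat F_i \,\langle R, dF_i \rangle \;=\; \sum_{i=1}^m \lambda_i \hat F_i \cdot d_i F_i \;=\; \Bigl(\sum_{i=1}^m d_i \lambda_i\Bigr) F,
\end{equation*}
using $\hat F_i F_i = F$. The projectivity criterion recalled in Section~\ref{notation} then yields the final equivalence, since $F$ is a nonzero polynomial. The only step that deserves attention is the integrability identity, and it hinges entirely on the observation that $\eta$ is closed; everything else is formal.
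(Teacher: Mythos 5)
Your proof is correct and follows essentially the same route as the paper: degree counting for (a), the factorization $\omega = F\eta$ with $\eta$ closed for (b), and Euler's formula termwise for (c). The only difference is that you spell out explicitly the ``short calculation'' the paper leaves to the reader in (b), namely $\omega\wedge d\omega = F\eta\wedge dF\wedge\eta = 0$ via antisymmetry, together with the density argument to upgrade the identity from $\{F\neq 0\}$ to all of $\C^{n+1}$.
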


\begin{proof} 

a) Since the exterior derivative is of degree zero, each term in the sum $ \sum_{i=1}^m \lambda_i  \  \hat{F}_i  \  dF_i $ is homogeneous of degree $d$, hence the claim.

b) For each polynomial $G$, the rational one-form $dG / G$ is closed. It follows that $\omega / F = \sum_{i=1}^m \lambda_i   \  dF_i /F_i $ is closed, hence integrable. A short calculation shows that the product of a rational function with an integrable rational one-form is an integrable rational one-form. Therefore, $\omega = F \ \omega/F$ is integrable.

c) Euler's formula implies that $<R, dG> = e G$ for $G \in S_n(e)$. By linearity of contraction we have
$<R, \omega> = <R,  \sum_{i} \lambda_i  \  \hat{F}_i  \  dF_i > = \sum_{i}  d_i \lambda_i   \hat{F}_i   F_i = (\sum_{i} d_i \lambda_i) F$.

\end{proof}

\medskip

\begin{proposition} \label{proposition1.5} Suppose $\omega$ is logarithmic as in \ref{log}. Then,

a) $d\omega = ({dF} / {F}) \wedge \omega =  \sum_{1 \le i, j \le m}   \lambda_j  \ \hat {F}_{ij} \  dF_i \wedge dF_j =  \sum_{1 \le i < j \le m}   (\lambda_j - \lambda_i) \ \hat {F}_{ij} \  dF_i \wedge dF_j $. 

b) $F$ is an integrating factor of $\omega$:
$d(\omega / F) = 0$,
or, equivalently, $F d\omega - dF \wedge \omega = 0$.

c) Each hypersurface $F_i = 0$ is an algebraic leaf of $\omega$, that is, $dF_i / F_i \wedge \omega$ is a regular 2-form (i. e. without poles). Hence $dF_i \wedge \omega = 0$ on the hypersurface $F_i = 0$.
\end{proposition}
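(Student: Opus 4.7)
The plan is to establish the three statements in the order (b), (a), (c), because (b) gives the relation $F\, d\omega = dF \wedge \omega$ that drives the computation in (a), and (c) is a direct calculation from the definition.

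First I would prove (b). From Definition \ref{definition notation}, $\omega/F = \sum_{i=1}^m \lambda_i\, dF_i/F_i$, and each $dF_i/F_i$ is a closed rational one-form (as used in the proof of Proposition \ref{proposition1}(b)). Hence $\omega/F$ is closed. Applying $d$ to the identity $\omega = F \cdot (\omega/F)$ and using the Leibniz rule gives
\[
d\omega = dF \wedge (\omega/F) + F \, d(\omega/F) = dF \wedge (\omega/F),
\]
so $F\, d\omega - dF \wedge \omega = 0$, which is (b) and simultaneously the first equality of (a).

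Next I would prove (a) by expanding $(dF/F) \wedge \omega$. The logarithmic derivative of a product gives $dF/F = \sum_{j=1}^m dF_j/F_j$; substituting the logarithmic expression for $\omega$ and simplifying products $F/(F_iF_j) = \hat F_{ij}$ yields
\[
\frac{dF}{F}\wedge \omega \;=\; \Bigl(\sum_{j} \frac{dF_j}{F_j}\Bigr)\wedge F \sum_{i} \lambda_i \frac{dF_i}{F_i} \;=\; \sum_{1\le i,j\le m} \lambda_j\, \hat F_{ij}\, dF_i \wedge dF_j,
\]
after relabeling. The diagonal terms vanish because $dF_i \wedge dF_i = 0$, so only $i\neq j$ survive. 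Splitting the double sum into $i<j$ and $i>j$ and using $dF_j \wedge dF_i = -dF_i \wedge dF_j$ and the symmetry $\hat F_{ij} = \hat F_{ji}$ groups the two contributions for each unordered pair into $(\lambda_j - \lambda_i)\, \hat F_{ij}\, dF_i \wedge dF_j$, giving the third expression.

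Finally, for (c), I would compute directly
\[
\frac{dF_i}{F_i}\wedge \omega \;=\; \frac{dF_i}{F_i}\wedge F \sum_{j} \lambda_j \frac{dF_j}{F_j} \;=\; \sum_{j\ne i} \lambda_j\, \hat F_{ij}\, dF_i \wedge dF_j,
\]
which is a polynomial (hence regular) $2$-form since $\hat F_{ij}$ contains neither $F_i$ nor $F_j$. Multiplying by $F_i$ then shows $dF_i \wedge \omega$ vanishes on $\{F_i=0\}$, as asserted. The only real obstacle is keeping the index bookkeeping and sign conventions consistent in the antisymmetrization step of (a); everything else is formal manipulation of logarithmic derivatives.
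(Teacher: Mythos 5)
Your proposal is correct; the paper itself only says these follow by straightforward calculations left to the reader, and your computations are exactly the intended ones (the closedness of $dF_i/F_i$ for (b), the expansion of $(dF/F)\wedge\omega$ with the relabeling/antisymmetrization for (a), and the polynomiality of $\sum_{j\ne i}\lambda_j\hat F_{ij}\,dF_i\wedge dF_j$ for (c)). All steps check out, including the sign bookkeeping in the passage from the double sum to the sum over $i<j$.
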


\begin{proof}
These follow by straightforward calculations, left to the reader.
\end{proof}

\medskip

\medskip

\section{The logarithmic components and their parametrization.}    \label{logarithmic component}

As before, we fix  natural numbers $n, d$ and  $m$ and a partition $\d = (m; d_1, \dots, d_m)$ of $d$.

For a complex vector space $V$ we denote $\P V = V -\{0\} / \C^*$ the corresponding projective space of one-dimensional subspaces of $V$. 
Let $\pi: V -\{0\} \to \P V$ be the canonical projection. If $X \subset V$ we call $\P X = \pi(X -\{0\}) \subset \P V$ the projectivization of $X$.

As in Section \ref{notation}, we denote 
$$\P^n(d) = \P H^0(\P^n, \Omega^1_{\P^n}(d))$$
the projective space of sections of $\Omega^1_{\P^n}(d)$. This is the ambient projective space that contains the set of integrable forms $\cF_n(d)$ and the logarithmic components that we will investigate.

\begin{definition} \label{definition component}
Let $L_n(\d) \subset H^0(\P^n, \Omega^1_{\P^n}(d))$ denote the set of all logarithmic projective one-forms of type $\d$ in $\P^n$, 
and $\P L_n(\d) \subset \P^n(d)$ its projectivization. We denote 
$$\\L_n(\d) \subset \P^n(d)$$ 
the Zariski closure of $\P L_n(\d)$.
\end{definition}

If $\omega$ is a non-zero logarithmic form, the corresponding projective point $\pi(\omega)$ will  be denoted simply by $\omega$
when the danger of confusion is small.

Let
$$\Lambda(\d) =  \{(\lambda_1, \dots, \lambda_m) \in \C^m /  \sum_{i=1}^m d_i \lambda_i = 0\}$$
which is a hyperplane in $\C^m$.

\begin{definition} \label{parametrization}
Consider the map
$$\mu: V_n(\d) := \Lambda(\d) \times \prod_{i=1}^m S_n(d_i)  \to H^0(\P^n, \Omega^1_{\P^n}(d))$$ 
such that 
$$\mu((\lambda_1, \dots, \lambda_m), (F_1, \dots, F_m)) = \sum_{i=1}^m \lambda_i  \  \hat{F}_i  \  dF_i$$
 and 
$$\rho: \P^n(\d) := \P \Lambda(\d) \times \prod_{i=1}^m \P S_n(d_i) \   \DashedArrow[->,densely dashed    ]  \  \P^n(d) = \P H^0(\P^n, \Omega^1_{\P^n}(d)) $$
such that
$$\rho(\pi(\lambda_1, \dots, \lambda_m), (\pi(F_1), \dots, \pi(F_m))) = \pi(\sum_{i=1}^m \lambda_i  \  \hat{F}_i  \  dF_i).$$
\end{definition}

\begin{remark} \label{remark2} 
a) $\mu$ is a multi-linear map. By Proposition \ref{proposition1}, the image of $\mu$ is $L_n(\d)$. 
\newline
\noindent
b) The induced map $\rho$ from a product of projective spaces into a projective space is only a rational map. Later we will determine the base locus $B(\rho) =\{(\pi(\lambda), \pi(F)) / \mu(\lambda, F) = 0\}$ of $\rho$. Anyway, it is clear that the image of $\rho$ is $\P L_n(\d)$. Hence $\\L_n(\d)$ is the closure of the image of $\rho$. Therefore, $\\L_n(\d)$ is a projective irreducible variety.

\end{remark}

\medskip

\medskip

\section{Base locus.}    \label{ }

Let $B(\mu) = \mu^{-1} (0)$. Then $B(\mu)  \subset V_n(\d)$ is an affine algebraic set, and we intend to describe its irreducible components.

Let us remark that the multilinearity of $\mu$ implies that $B(\mu)$ is stable under the natural action of $({\C^*})^{m+1}$ on $V_n(\d)$. 

From the multilinearity of $\mu$ it follows that $Z = \{(\lam, \F) \in V_n(\d)/ \lam = 0 \mathrm{ \ or \ } F_i = 0 \mathrm{ \ for \ some \ } i \}$ is contained in $B(\mu)$. We denote $B = B(\mu) - Z$ and
$$B(\rho) = \pi(B) \subset \P^n(\d)$$
the base locus of $\rho$. 

An example of a point in the base locus is the following. Suppose $d_1 = \dots = d_m$. It is then clear that if $F_1 = \dots = F_m$ then $(\lam, \F) \in B(\mu)$. More generally, each string of equal $d_i$'s gives elements of $B(\mu)$: if $d_i = d_j$ for all $i, j \in A$, where $A \subset \{1, \dots, m\}$, then taking $F_i = F_j$ for all $i, j \in A$, $\sum_{i \in A} d_i \lambda_i =0$, $\lambda_j = 0$ for $j \notin A$, we obtain that $(\lam, \F) \in B(\mu)$.

These examples generalize as follows: suppose our $d_i$'s may be written as 
\begin{equation}
d_i = \sum_{j=1}^{m'} e_{ij}  d'_j, \ \ \ i = 1, \dots, m, \label{factorization}
\end{equation}
where $m' \in \N$,  $d'_j \ge 1$ and $e_{ij} \ge 0$ are integers. 
Let  $\lam \in \Lambda_n(\d)$ such that 
$\sum_{i=1}^{m} e_{ij} \lambda_i = 0$ for $j=1, \dots, m'$, and take $\F$ such that
\begin{equation}
F_i = \prod_{j=1}^{m'} G_j^{e_{ij}}   \label{factorization2}
\end{equation}
for some $G_j \in S_n(d'_j)$, $j=1, \dots , m'$. Then,
\begin{equation}
\sum_{i=1}^m \lambda_i  \  dF_i /F_i = \sum_{i=1}^m \lambda_i  \sum_{j=1}^{m'} e_{ij}  \  dG_j /G_j =
\sum_{j=1}^{m'}  (\sum_{i=1}^m  \lambda_i  e_{ij})  \  dG_j /G_j = 0 \label{base}
\end{equation}
and we obtain elements in the base locus.

\medskip

\noindent
We will see now that this construction accounts for all the irreducible components of the base locus.

\medskip

\begin{definition} \label{definition F}
We denote $F(\d)$ the collection of all decompositions of $\d$ as  in \ref{factorization}, that is,  let
$$F(\d) = \{(m', e, \d') / \  m' \in \N, \ e \in \N^{m \times m'}, \ \d' \in (\N-\{0\})^{m'}, \  \d=e \ \d', \ e \mathrm { \  without \ zero \ columns \ } \}$$
\end{definition}

\noindent
In \ref{factorization}, for each $i$ there exists $j$ such that $e_{ij} > 0$; that is, all rows of $e$ are non-zero. 
This follows from $d_i > 0$. 
If the $j$-th column of $e$ is zero then in the decomposition  \ref{factorization} the terms
$e_{ij} d'_j$ are zero and do not contribute, so this zero column may be disregarded.

\medskip

\noindent
Let us remark that $F(\d)$ is finite: we have,
$d = \sum_i d_i = \sum_{i, j} e_{ij} d'_j \ge \sum_j d'_j \ge m'$, hence $m'$ is bounded. Also, \ref{factorization} implies $e_{ij} \le d_i / d'_j \le d_i$, so
all $e_{ij}$ are also bounded.

\medskip

\noindent
For $\varphi = (m', e, \d') \in F(\d)$
denote the (Segre-Veronese) map
$$\nu_{\varphi}: \prod_{j=1}^{m'} S_n(d'_j)  \to \prod_{i=1}^m S_n(d_i)$$
$$\nu_{\varphi}(G_1, \dots, G_{m'}) = (F_1, \dots, F_m)$$
such that $F_i = \prod_{j=1}^{m'} G_j^{e_{ij}}$. Also, let 
$$\Lambda(e) = \{\lam \in \Lambda(\d) / \lam \ e = 0\}$$
which is a linear subspace of $\C^m$ of dimension  $m - \rank(e)$. 

\noindent
Notice  that $\lam \ e = 0$ implies  $\lam \ \d = 0$. For $\varphi \in F(\d)$ let
$$B_{\varphi} =  \Lambda(e) \times \im \nu_{\varphi} \subset V_n(\d)$$
By the calculation \ref{base} we know that $B_{\varphi} \subset B(\mu)$ for all $\varphi \in F(\d)$.

\noindent
Each $B_{\varphi}$ is clearly irreducible. Next we will see, first, that $B(\mu) = Z \cup \bigcup_{\varphi \in F(\d)} B_{\varphi}$. 
And, second, we will determine when there are inclusions among the $B_{\varphi}$'s, thus characterizing the irreducible components of the base locus.

\noindent
Let us first recall  from \cite{Joua}, Lemme 3.3.1, page 102, the following

\begin{proposition} \label{lemajoua}
Let $F_i \in S_n(d_i), \ i=1, \dots, m$, be irreducible distinct  (modulo multiplicative constants) homogeneous polynomials. 
If $\lambda_i \in \C$ are such that 
$$\sum_{i=1}^m \lambda_i \ dF_i/F_i = 0$$
 then $\lambda_i = 0$ for all $i$. That is, the rational one-forms $dF_1 / F_1, \dots, dF_m / F_m$ are linearly independent over $\C$.
\end{proposition}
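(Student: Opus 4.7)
The plan is to clear denominators and then do a residue-style argument along each divisor $\{F_k = 0\}$, exploiting that $F_k$ is irreducible and coprime to the other $F_j$.

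First, I would multiply the relation $\sum_{i=1}^m \lambda_i\,dF_i/F_i = 0$ by $F = \prod_j F_j$ to obtain the polynomial identity
\[
\sum_{i=1}^m \lambda_i \,\hat{F}_i\, dF_i \;=\; 0 \quad \text{in } \Omega^1_n.
\]
Fix an index $k \in \{1,\dots,m\}$. For every $j \neq k$, the product $\hat{F}_j = \prod_{i \neq j} F_i$ contains $F_k$ as a factor, so $F_k$ divides each coefficient of $\hat{F}_j\, dF_j$. Reducing the above identity modulo the ideal generated by $F_k$ in $S_n$, all terms with $j \neq k$ drop out, leaving
\[
\lambda_k\, \hat{F}_k\, dF_k \;\equiv\; 0 \pmod{F_k}.
\]

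Next I use irreducibility. Since the $F_i$ are pairwise non-proportional irreducible homogeneous polynomials, $F_k$ is coprime to every $F_j$ with $j \neq k$, and therefore coprime to $\hat{F}_k$. Writing $dF_k = \sum_{\ell=0}^n (\partial F_k / \partial x_\ell) \, dx_\ell$, the congruence above says that $F_k$ divides $\lambda_k \,\hat{F}_k \,\partial F_k / \partial x_\ell$ for every $\ell$. By coprimality, $F_k \mid \lambda_k\, \partial F_k/\partial x_\ell$ for each $\ell$.

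Finally, assume for contradiction that $\lambda_k \neq 0$. Then $F_k \mid \partial F_k / \partial x_\ell$ for every $\ell$. But $\partial F_k / \partial x_\ell$ has degree $d_k - 1 < d_k = \deg F_k$, so divisibility forces $\partial F_k / \partial x_\ell = 0$ for all $\ell$; in characteristic zero this contradicts the fact that $F_k$ is a non-constant (irreducible, hence positive degree) homogeneous polynomial. Hence $\lambda_k = 0$, and since $k$ was arbitrary, all $\lambda_i$ vanish. There is no real obstacle beyond keeping track of the coprimality step; the argument is just the standard ``reduction modulo $F_k$'' trick combined with the degree drop of the differential.
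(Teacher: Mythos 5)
Your argument is correct and complete: clearing denominators, reducing modulo the prime ideal $(F_k)$, using that $\hat F_k$ is coprime to $F_k$ (since the $F_i$ are irreducible and pairwise non-proportional), and then killing $\lambda_k$ by the degree drop of the partial derivatives (with Euler's formula guaranteeing a nonzero partial in characteristic zero) is exactly the standard residue-along-$\{F_k=0\}$ argument. The paper itself does not prove this statement --- it simply recalls it as Lemme 3.3.1 of Jouanolou's \emph{\'Equations de Pfaff alg\'ebriques} --- so your self-contained elementary proof is a strict addition rather than a deviation; it matches in spirit the argument one finds in that reference.
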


\begin{corollary} \label{}
Let $(\lam, \F) \in V_n(\d)$ with the $F_i$ distinct and irreducible, and $\lam \ne 0$. Then $(\lam, \F) \notin B(\mu)$.
\end{corollary}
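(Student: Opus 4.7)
\noindent
My approach is a direct reduction to Proposition \ref{lemajoua}. I would assume for contradiction that $(\lam, \F) \in B(\mu)$, so that
$$\sum_{i=1}^m \lambda_i \ \hat{F}_i \ dF_i = 0$$
in $\Omega^1_n$. Since each $F_i$ is irreducible and hence nonzero, the product $F = \prod_{j=1}^m F_j$ is a nonzero element of $S_n$.

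The key step is to factor out $F$ and pass from the polynomial to the rational setting. I would rewrite the hypothesis as $F \cdot \sum_{i=1}^m \lambda_i \ dF_i/F_i = 0$ in the space of rational one-forms on $\C^{n+1}$. Since multiplication by the nonzero rational function $F$ is injective there, this yields
$$\sum_{i=1}^m \lambda_i \ dF_i/F_i = 0.$$
At this point the hypotheses of Proposition \ref{lemajoua} are satisfied verbatim: the $F_i$ are distinct and irreducible, hence in particular pairwise distinct modulo multiplicative constants. Invoking the proposition forces $\lambda_i = 0$ for every $i$, contradicting $\lam \neq 0$.

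The whole argument rests on the elementary factorization $\hat{F}_i \, dF_i = F \cdot dF_i/F_i$: once one divides by $F$ to land in the rational setting, Jouanolou's lemma does all the remaining work. I do not anticipate any serious obstacle, since this corollary is essentially a repackaging of \ref{lemajoua} tailored to the multilinear map $\mu$, the only content being the observation that $F \neq 0$ permits the division.
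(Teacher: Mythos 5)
Your argument is correct and is exactly the intended one: the paper states this corollary without any written proof, as an immediate consequence of Proposition \ref{lemajoua} obtained by dividing the identity $\sum_{i} \lambda_i \, \hat{F}_i \, dF_i = 0$ by the nonzero polynomial $F$, which is precisely your reduction. One small caution: ``distinct'' here must be read as ``distinct modulo multiplicative constants'' (matching the hypothesis of Proposition \ref{lemajoua}); your parenthetical claim that distinctness as polynomials implies non-proportionality is backwards (e.g.\ $F_1 = x$, $F_2 = 2x$ are distinct but proportional, and do give a point of $B(\mu)$), though this concerns the reading of the corollary's hypothesis rather than the substance of your reduction.
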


\begin{proposition} \label{propbase}
With the notations above, we have $B(\mu) = Z \cup \bigcup_{\varphi \in F(\d)} B_{\varphi}$.
\end{proposition}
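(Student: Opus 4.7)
The inclusion $\supseteq$ is already in hand: the examples that precede the statement show that $Z\subseteq B(\mu)$ by multilinearity, and the calculation in equation \ref{base} shows $B_\varphi\subseteq B(\mu)$ for every $\varphi\in F(\d)$. So the content is the reverse inclusion: any $(\lam,\F)\in B(\mu)\setminus Z$ is realized inside some $B_\varphi$.

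The plan is to extract the data $(m',e,\d')$ of a decomposition $\varphi\in F(\d)$ directly from the prime factorizations of the $F_i$, and then read off the relation $\lam\cdot e=0$ from Jouanolou's Proposition \ref{lemajoua}. Concretely, fix $(\lam,\F)\in B(\mu)$ with $\lam\ne 0$ and all $F_i\ne 0$. Let $G_1,\dots,G_{m'}$ be representatives, one per class modulo scalars, of all the irreducible factors that occur in at least one of the $F_1,\dots,F_m$. Let $e_{ij}\ge 0$ be the multiplicity of $G_j$ in $F_i$ and set $d'_j=\deg G_j$. Then by construction
$$F_i=\prod_{j=1}^{m'} G_j^{e_{ij}},\qquad d_i=\sum_{j=1}^{m'} e_{ij}d'_j,$$
so $\d=e\,\d'$. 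Each column of $e$ is non-zero because every $G_j$ was chosen to appear in some $F_i$, and each row of $e$ is non-zero because $d_i\ge 1$. Hence $\varphi:=(m',e,\d')\in F(\d)$, and $(F_1,\dots,F_m)\in\operatorname{im}\nu_\varphi$.

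It remains to show $\lam\in\Lambda(e)$. Since $F=\prod F_i\ne 0$, the equation $\mu(\lam,\F)=F\sum_i\lambda_i\,dF_i/F_i=0$ in $\Omega^1_n$ forces the identity
$$\sum_{i=1}^m \lambda_i\,\frac{dF_i}{F_i}=0$$
of rational one-forms. Using $dF_i/F_i=\sum_{j=1}^{m'} e_{ij}\,dG_j/G_j$ and interchanging the order of summation,
$$\sum_{j=1}^{m'}\Bigl(\sum_{i=1}^m \lambda_i e_{ij}\Bigr)\frac{dG_j}{G_j}=0.$$
The $G_j$'s are, by construction, distinct modulo scalars and irreducible, so Proposition \ref{lemajoua} applies and yields $\sum_i \lambda_i e_{ij}=0$ for every $j$, i.e.\ $\lam\cdot e=0$. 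Thus $\lam\in\Lambda(e)$ and $(\lam,\F)\in B_\varphi$, as desired.

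The only delicate point is step (3), the passage from $\mu(\lam,\F)=0$ to the rational-one-form identity and then to the vanishing of the coefficients $\sum_i\lambda_i e_{ij}$. One must first cancel $F$ (legitimate because $(\lam,\F)\notin Z$), then group the logarithmic derivatives according to the distinct prime factors, which requires the normalization that the $G_j$ were chosen pairwise non-proportional and irreducible so that Jouanolou's lemma can be invoked. Everything else is a bookkeeping check that the $(m',e,\d')$ produced this way genuinely lies in $F(\d)$.
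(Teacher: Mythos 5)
Your proof is correct and follows essentially the same route as the paper: factor each $F_i$ into powers of pairwise non-proportional irreducible $G_j$'s, read off $(m',e,\d')\in F(\d)$, and apply Jouanolou's Proposition \ref{lemajoua} to the regrouped identity $\sum_j(\sum_i\lambda_i e_{ij})\,dG_j/G_j=0$ to get $\lam\,e=0$. Your explicit check that $e$ has no zero columns (so that $\varphi$ genuinely lies in $F(\d)$) is a small point the paper glosses over, but otherwise the arguments coincide.
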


\begin{proof}
Let $(\lam, \F) \in B = B(\mu) - Z$. Write each $F_i$ as a product of distinct irreducible homogeneous polynomials:
$$F_i = \prod_{j=1}^{m'} G_j^{e_{ij}}$$
We allow some $e_{ij} = 0$. Denote $d'_j$ the degree of $G_j$. Taking degree we obtain $\d = e \ \d'$. 
Repeating the calculation of \ref{base} we have
\begin{equation}
0 = \sum_{i=1}^m \lambda_i  \  dF_i /F_i = \sum_{i=1}^m \lambda_i  \sum_{j=1}^{m'} e_{ij}  \  dG_j /G_j =
\sum_{j=1}^{m'}  (\sum_{i=1}^m  \lambda_i  e_{ij})  \  dG_j /G_j  \label{base2}
\end{equation}
Since the $G_j$ are irreducible, Proposition \ref{lemajoua} implies that $\sum_{i=1}^m  \lambda_i  e_{ij} = 0$
for all $j = 1, \dots, m'$. Therefore, $(\lam, \F) \in B_{\varphi}$ with $\varphi = (m', e, \d') \in F(\d)$, as claimed.
\end{proof}

\noindent
Regarding possible inclusions among the $B_{\varphi}$'s, we make the following

\begin{definition} 
For $\varphi_1 = (m_1, e_1, \d_1), \  \varphi_2 = (m_2, e_2, \d_2) \in F(\d)$ we write $\varphi_2 \le \varphi_1$ if 
$\rank(e_1) = \rank(e_2)$ and there exists $e_3 \in \N^{m_1 \times m_2}$ such that $e_2 = e_1 \ e_3.$ 
\end{definition}

\noindent
Then we have

\begin{proposition} 
For $\varphi_1, \varphi_2 \in F(\d)$, $B_{\varphi_2} \subset B_{\varphi_1}$ if and only if  $\varphi_2 \le \varphi_1$.
\end{proposition}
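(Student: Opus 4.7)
The plan is to handle the two implications separately, exploiting the product structure $B_{\varphi}=\Lambda(e)\times\im\nu_{\varphi}$ to decouple the $\Lambda$-factor from the image factor, and to reduce everything to (i) elementary linear algebra on the $\lambda$-side and (ii) unique factorization in $S_n$ on the polynomial side.

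\textbf{Forward direction.} Assume $\varphi_2\leq\varphi_1$, so $e_2=e_1 e_3$ with $e_3\in\N^{m_1\times m_2}$ and $\rank(e_1)=\rank(e_2)$. Any $\lambda\in\Lambda(e_1)$ satisfies $\lambda e_2 = \lambda e_1 e_3 = 0$, giving $\Lambda(e_1)\subset\Lambda(e_2)$; since $\dim\Lambda(e_i)=m-\rank(e_i)$ are equal by hypothesis, this is actually $\Lambda(e_1)=\Lambda(e_2)$. For the image factor, given $(H_1,\ldots,H_{m_2})$ with $(F_1,\ldots,F_m)=\nu_{\varphi_2}(H_1,\ldots,H_{m_2})$, set $G_j := \prod_k H_k^{(e_3)_{jk}}$; a direct exponent computation gives
\[
\prod_j G_j^{(e_1)_{ij}} \;=\; \prod_k H_k^{\sum_j (e_1)_{ij}(e_3)_{jk}} \;=\; \prod_k H_k^{(e_2)_{ik}} \;=\; F_i,
\]
so $(F_1,\ldots,F_m)\in\im\nu_{\varphi_1}$ \emph{provided} $\deg G_j=(\d_1)_j$. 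Since $e_1(\d_1-e_3\d_2)=\d-\d=0$, the discrepancy $\d_1-e_3\d_2$ always lies in $\ker(e_1)$, and a small auxiliary lemma lets us adjust $e_3$ within its affine fibre over $e_2$ to arrange additionally $e_3\d_2=\d_1$, so the degree check holds.

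\textbf{Backward direction.} Assume $B_{\varphi_2}\subset B_{\varphi_1}$. Since both $\Lambda(e_i)$ and $\im\nu_{\varphi_i}$ are nonempty, projecting onto each factor yields $\Lambda(e_2)\subset\Lambda(e_1)$ and $\im\nu_{\varphi_2}\subset\im\nu_{\varphi_1}$. To extract $e_3$ concretely, choose generic distinct irreducible polynomials $H_k\in S_n((\d_2)_k)$ and set $F_i:=\prod_k H_k^{(e_2)_{ik}}$. By hypothesis, $F_i=\prod_j G_j^{(e_1)_{ij}}$ for some $G_j\in S_n((\d_1)_j)$; unique factorization in $S_n$ forces each $G_j=c_j\prod_k H_k^{a_{jk}}$ with $c_j\in\C^*$ and $a_{jk}\in\N$, and matching the exponent of $H_k$ in $F_i$ yields $e_1\cdot a=e_2$. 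Setting $e_3:=a\in\N^{m_1\times m_2}$ gives the required factorization (a degree count even yields $a\d_2=\d_1$ as a bonus). Combining the automatic inclusion $\Lambda(e_1)\subset\Lambda(e_2)$ (from $e_2=e_1 e_3$) with $\Lambda(e_2)\subset\Lambda(e_1)$ gives $\Lambda(e_1)=\Lambda(e_2)$, hence equal dimension and $\rank(e_1)=\rank(e_2)$.

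\textbf{Main obstacle.} The subtle point is the degree compatibility in the forward direction: among all $e_3\in\N^{m_1\times m_2}$ with $e_1 e_3=e_2$, exhibiting one with $e_3\d_2=\d_1$. Observe that the backward direction's unique-factorization argument, applied to a generic nontrivial point of $\im\nu_{\varphi_2}$, actually constructs such an $e_3$, so the existence is intrinsic to the ordering; a purely forward proof requires a short combinatorial adjustment of $e_3$ inside $\ker(e_1)$ preserving nonnegativity, absorbing the vector $\d_1-e_3\d_2\in\ker(e_1)$.
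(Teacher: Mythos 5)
Your backward direction is correct and is essentially the paper's own argument: specialize to a point of $B_{\varphi_2}$ whose $H_k$ are distinct and irreducible, use unique factorization (together with the fact that $e_1$ has no zero columns, so every $G_j$ divides some $F_i$) to extract $e_3$ with $e_1e_3=e_2$, and deduce $\rank(e_1)=\rank(e_2)$ from $\Lambda(e_1)\subset\Lambda(e_2)\subset\Lambda(e_1)$. The genuine gap is in the forward direction, precisely at the point you label the ``main obstacle''. The auxiliary lemma you invoke --- that among the $e_3'\in\N^{m_1\times m_2}$ with $e_1e_3'=e_2$ one can always find one with $e_3'\d_2=\d_1$ --- is not proved, and it is false. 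Take $\d=(2;6,6)$, $\varphi_1=\bigl(2,\left(\begin{smallmatrix}1&1\\1&1\end{smallmatrix}\right),(2,4)\bigr)$ and $\varphi_2=\bigl(1,\left(\begin{smallmatrix}1\\1\end{smallmatrix}\right),(6)\bigr)$. Then $e_1e_3=e_2$ for $e_3=\left(\begin{smallmatrix}1\\0\end{smallmatrix}\right)$ and $\rank(e_1)=\rank(e_2)=1$, so $\varphi_2\le\varphi_1$ as defined; but $e_1e_3'=e_2$ forces $(e_3')_1+(e_3')_2=1$, whereas $e_3'\d_2=\d_1$ would force $6(e_3')_1=2$, so no admissible $e_3'$ exists (the correction in $\ker(e_1)=\langle(1,-1)\rangle$ that you propose would have to be $-\tfrac23(1,-1)$, which is not integral).

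Worse, in this example the conclusion itself fails: for $H$ irreducible of degree $6$ the pair $(H,H)$ lies in $\im\nu_{\varphi_2}$ but not in $\im\nu_{\varphi_1}$, since $H$ has no factorization into factors of degrees $2$ and $4$; as $\Lambda(e_1)=\Lambda(e_2)=\{\lambda_1+\lambda_2=0\}\ne 0$, this gives $B_{\varphi_2}\not\subset B_{\varphi_1}$ although $\varphi_2\le\varphi_1$. So the implication you are trying to prove does not hold for the order relation as literally defined; the degree identity $e_3\d_2=\d_1$ (which, as you observe, your backward argument produces ``as a bonus'') has to be incorporated into the definition of $\le$, after which the forward direction is immediate by setting $G_j=\prod_k H_k^{(e_3)_{jk}}$, now of the correct degree. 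Your suggestion to obtain such an $e_3$ from the backward direction is circular, since that argument presupposes the inclusion $\im\nu_{\varphi_2}\subset\im\nu_{\varphi_1}$ you are trying to establish. For comparison, the paper's proof of this direction asserts only that $e_2=e_1e_3$ ``easily implies'' $\im\nu_{\varphi_2}\subset\im\nu_{\varphi_1}$ and silently passes over the same degree obstruction: you have correctly isolated a real defect, but the repair you sketch does not work.
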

\begin{proof}
Suppose $B_{\varphi_2} \subset B_{\varphi_1}$. Choose an element $(\lam, \F)  \in B_{\varphi_2}$, that is, $\lam \ e_2= 0$ and
$F_i = \prod_{k=1}^{m_2} {H}_k^{{e_2}_{ik}}$ for all $i$, for some $H_k$. We may take this element so that the $H_k$'s are irreducible. 
By our hypothesis, $(\lam, \F)  \in B_{\varphi_1}$ and we also have $F_i = \prod_{j=1}^{m_1} G_j^{{e_1}_{ij}}$ for all $i$, for some $G_j$.  By unique factorization and the irreducibility of the $H_k$,
$G_j = \prod_{k=1}^{m_2} {H}_k^{{e_3}_{jk}}$ for some ${e_3}_{jk} \in \N$. A simple calculation now gives $e_2 = e_1 \ e_3$.

Also, the  equality $e_2 = e_1 \ e_3$ just obtained easily implies $\Lambda(e_1)  \subset \Lambda(e_2)$. Since we are assuming
$B_{\varphi_2} \subset B_{\varphi_1}$, we also have  $\Lambda(e_2)  \subset \Lambda(e_1)$. 
Hence  $\Lambda(e_1) = \Lambda(e_2)$, and therefore $\rank(e_1) = \rank(e_2)$.

Conversely, suppose $\varphi_2 \le \varphi_1$.  Then $e_2 = e_1 \ e_3$ and $\rank(e_1) = \rank(e_2)$ imply, as before, that 
$\Lambda(e_1) = \Lambda(e_2)$. Also, the condition $e_2 = e_1 \ e_3$ easily implies that  $\im \nu_{\varphi_2}  \subset \im \nu_{\varphi_1}$.
Hence $B_{\varphi_2} \subset B_{\varphi_1}$.
\end{proof}

\medskip

\begin{corollary} 
The irreducible components of $B(\rho)$ are the $\pi(B_{\varphi})$ for $\varphi$ a maximal element of the finite ordered set  $(F(\d), \le)$.
\end{corollary}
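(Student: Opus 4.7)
The plan is to push the two previous results---the decomposition $B(\mu) = Z \cup \bigcup_{\varphi \in F(\d)} B_{\varphi}$ from Proposition \ref{propbase} and the inclusion criterion $B_{\varphi_2} \subset B_{\varphi_1} \Leftrightarrow \varphi_2 \le \varphi_1$ just proved---through the projection $\pi$ to $\P^n(\d)$. First I would apply $\pi$, which is defined on $V_n(\d)\setminus Z$, to obtain
\begin{equation*}
B(\rho) \;=\; \pi\bigl(B(\mu)\setminus Z\bigr) \;=\; \bigcup_{\varphi\in F(\d)} \pi(B_\varphi\setminus Z).
\end{equation*}
Those $\varphi$ with $B_\varphi\subset Z$---namely the ones with $\rank(e)=m$, so that $\Lambda(e)=0$---contribute nothing and can be discarded. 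For every remaining $\varphi$, $B_\varphi = \Lambda(e)\times \im\nu_\varphi$ is irreducible (a product of an affine space with the image of a morphism out of an irreducible variety), and $B_\varphi\setminus Z$ is a dense open subset, so $\pi(B_\varphi\setminus Z)$ is irreducible. I write $\pi(B_\varphi)$ for its Zariski closure in $\P^n(\d)$.

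Since $F(\d)$ is finite, $B(\rho)=\bigcup_\varphi \pi(B_\varphi)$ is a finite union of irreducible closed subvarieties, so its irreducible components are exactly the maximal members of this collection under inclusion. It therefore remains to match the containment order among the $\pi(B_\varphi)$ with the preorder $\le$ on $F(\d)$. The direction $\varphi_2\le\varphi_1 \Rightarrow \pi(B_{\varphi_2})\subset \pi(B_{\varphi_1})$ is immediate from the previous proposition by applying $\pi$ and taking closures.

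The converse is the one place where care is needed, and is the step I expect to be the main obstacle. My plan is to use the fact, recorded at the start of Section 5, that $B(\mu)$---and in fact each $B_\varphi$---is stable under the natural $(\C^*)^{m+1}$-action on $V_n(\d)$, whose orbits are precisely the fibres of $\pi$. Consequently $B_\varphi\setminus Z$ is its own $\pi$-saturation inside $V_n(\d)\setminus Z$, i.e.\ $\pi^{-1}(\pi(B_\varphi\setminus Z))\cap (V_n(\d)\setminus Z) = B_\varphi\setminus Z$. Assuming $\pi(B_{\varphi_2})\subset \pi(B_{\varphi_1})$ and restricting to the open subset of constructible points that actually arise from $B_{\varphi_2}\setminus Z$, this saturation identity forces $B_{\varphi_2}\setminus Z \subset B_{\varphi_1}$, and passing to Zariski closures gives $B_{\varphi_2}\subset B_{\varphi_1}$. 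The previous proposition then delivers $\varphi_2\le\varphi_1$, completing the order-preserving identification and hence proving the corollary.
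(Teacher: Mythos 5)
The paper states this corollary with no proof at all---it is meant to follow at once from Proposition \ref{propbase} and the inclusion criterion---so your job was to supply the missing details, and most of your write-up does so correctly: the reduction of $B(\rho)$ to the finite union $\bigcup_\varphi \pi(B_\varphi\setminus Z)$, the discarding of the $\varphi$ with $\rank(e)=m$, the irreducibility and closedness of each piece, and the implication $\varphi_2\le\varphi_1\Rightarrow\pi(B_{\varphi_2})\subset\pi(B_{\varphi_1})$ are all fine.

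The gap is exactly where you flagged the main obstacle. The claim that each $B_\varphi$ is stable under the $(\C^*)^{m+1}$-action---hence that $B_\varphi\setminus Z$ is $\pi$-saturated---is false whenever $\rank(e)<m$, which is precisely the case surviving your first reduction. The factor $\Lambda(e)$ is of course a cone, but $\im\nu_\varphi$ is only stable under the subtorus $\{(\prod_j s_j^{e_{ij}})_i : s\in(\C^*)^{m'}\}$ of $(\C^*)^m$, whose dimension is $\rank(e)$. Concretely, for $m=2$, $d_1=d_2$ and $\varphi$ the diagonal decomposition ($m'=1$, $e$ the column of two ones), one has $\im\nu_\varphi=\{(G,G)\}$, and the point $((1,-1),(G,2G))$ lies in $B(\mu)\setminus Z$ and in the same fibre of $\pi$ as $((1,-1),(G,G))\in B_\varphi$, yet not in $B_\varphi$; so $\pi^{-1}\bigl(\pi(B_\varphi\setminus Z)\bigr)\cap\bigl(V_n(\d)\setminus Z\bigr)$ is strictly larger than $B_\varphi\setminus Z$ and the saturation identity you invoke fails. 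The conclusion you want is still reachable, but not by this route: from $\pi(B_{\varphi_2})\subset\pi(B_{\varphi_1})$ you only get, for a generic $(\lam,\F)\in B_{\varphi_2}\setminus Z$ with the $H_k$ distinct irreducibles, that $(t_0\lam,\,t_1F_1,\dots,t_mF_m)\in B_{\varphi_1}$ for some nonzero scalars $t_i$, and you must rerun the unique-factorization argument of the preceding proposition on this rescaled point, absorbing the resulting unit factors into the $G_j$ and using that $t_0\lam\in\Lambda(e_1)$ iff $\lam\in\Lambda(e_1)$, to conclude $e_2=e_1e_3$, $\Lambda(e_2)\subset\Lambda(e_1)$ and hence $\varphi_2\le\varphi_1$. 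In short, the comparison of the containment order on the $\pi(B_\varphi)$ with the order $\le$ has to be proved projectively from scratch rather than deduced from the affine proposition by a saturation argument.
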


\medskip

\medskip

\section{Generic injectivity.}    \label{ }

Suppose
$(\lam, \F),  (\lam', \F') \in V_n(\d)$ are such that $\mu(\lam, \F) = \mu(\lam', \F') \ne 0$, that is, 
$$F \  \sum_{i=1}^m \lambda_i   \  dF_i /F_i  = \omega = F' \  \sum_{i=1}^m \lambda'_i   \  dF'_i /F'_i.$$
Next we discuss conditions that imply that $(\lam, \F) = (\lam', \F')$.

\medskip

Let's observe that if the partition $\d$ contains repeated ${d_i}\ 's$ then the generic injectivity may hold only \emph{up to order}.
More precisely, suppose $A \subset \{1, \dots, m\}$ is such that $d_i = d_j$ for all $i, j \in A$. For each permutation 
$\sigma \in \S_m$ such that $\sigma(j) = j$ for $j \notin A$, clearly we have $\mu(\lam, \F) = \mu(\sigma. \lam, \sigma. \F)$
for all $(\lam, \F) \in V_n(\d)$. For $e \in \N$ let $A_e = \{i / d_i =e\}$. Then the non-empty $A_e$ form a partition of $\{1, \dots, m\}$.
Let $\S(e) = \{\sigma \in \S_m / \sigma(j) =j, \forall j \notin A_e\}$ and $\S(\d) = \prod_e \S(e)$.
Then the subgroup $\S(\d) \subset \S_m$ acts on $V_n(\d)$
and $\mu$ is constant on its orbits. By injectivity \emph{up to order} we will of course mean injectivity of the  induced map
with domain $V_n(\d)/\S(\d)$.

\begin{proposition} \label{injectivity} The rational map
$$\rho: \P^n(\d) \   \DashedArrow[->,densely dashed    ]  \  \\L_n(\d)  \subset  \P^n(d)$$
as in Definition \ref{parametrization}, is generically injective (up to order).
\end{proposition}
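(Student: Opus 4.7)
The plan is to prove injectivity on a dense Zariski open $U\subset\P^n(\d)$ where each $F_i$ is irreducible, the $F_i$ are pairwise non-proportional, and each $\lambda_i$ is nonzero. By the irreducibility of $\P^n(\d)$ this is enough for generic injectivity of $\rho$. If $\rho(\lam,\F)=\rho(\lam',\F')$ on $U$, after absorbing a scalar we may assume $\mu(\lam,\F)=\mu(\lam',\F')=:\omega$.

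The decisive step is to recover $F=\prod_iF_i$ from $\omega$. By Proposition \ref{proposition1.5}(b), $F$ is an integrating factor for $\omega$; if $F'=\prod_jF'_j$ is another integrating factor, then from $d(\omega/F)=d(\omega/F')=0$ and the identity $\omega/F'=(F/F')(\omega/F)$ we get $d(F/F')\wedge\omega=0$, so $F/F'$ is a rational first integral of $\omega$. I would then show that a generic logarithmic form admits no non-constant rational first integral: the divisor of any such integral is supported on irreducible algebraic invariant hypersurfaces, which by Proposition \ref{proposition1.5}(c) together with a genericity argument coincide with $\{F_1=0\},\dots,\{F_m=0\}$, so the integral must have the form $H=c\prod_iF_i^{a_i}$ with $a_i\in\Z$. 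The condition $dH\wedge\omega=0$ then reduces, after dividing by $HF$, to the vanishing of a logarithmic $2$-form whose coefficients are $a_i\lambda_j-a_j\lambda_i$; invoking the $2$-form analogue of Proposition \ref{lemajoua} forces $a_i\lambda_j=a_j\lambda_i$, and on the open condition that no two $\lambda_i$ are $\Q$-proportional this gives $a_i=0$ for all $i$, so $H$ is constant.

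With $F$ determined up to scalar, a further rescaling makes $F=F'$. Unique factorization in $S_n$ yields $\sigma\in\S_m$ and scalars $c_i\in\C^*$ with $F'_{\sigma(i)}=c_iF_i$; since $\deg F'_{\sigma(i)}=\deg F_i$, the permutation lies in the stabilizer $\S(\d)$. Setting $C=\prod_jc_j$, one checks that $\hat F'_{\sigma(i)}\,dF'_{\sigma(i)}=C\,\hat F_i\,dF_i$, so writing $\omega$ in both parametrizations and equating gives, after dividing by $F$,
$$\sum_{i=1}^m\bigl(C\lambda'_{\sigma(i)}-\lambda_i\bigr)\,\frac{dF_i}{F_i}=0.$$
Proposition \ref{lemajoua} yields $\lambda_i=C\lambda'_{\sigma(i)}$ for all $i$, which projectively means that $(\pi(\lam),\pi(\F))$ and $(\pi(\lam'),\pi(\F'))$ coincide modulo $\S(\d)$, giving injectivity up to order.

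The main obstacle is the genericity claim that a general logarithmic foliation has no algebraic invariant hypersurface beyond $\{F_1=0\},\dots,\{F_m=0\}$, equivalently no non-constant rational first integral. Since having an extra algebraic invariant hypersurface of bounded degree is a Zariski-closed condition on $\\L_n(\d)$, one route is to exhibit a single logarithmic foliation in $\\L_n(\d)$ without such an extra invariant hypersurface (for instance with generic $\lambda_i$ and generic $F_i$ of the prescribed degrees), so that the failure locus is a proper closed subvariety and may be excluded from $U$; a secondary technical point is the $2$-form analogue of Jouanolou's lemma invoked above, which nevertheless follows by the same residue argument as Proposition \ref{lemajoua}.
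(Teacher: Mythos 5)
Your overall strategy coincides with the paper's: if $(\lam,\F)$ and $(\lam',\F')$ parametrize the same $\omega$ with $F\ne F'$, then $F$ and $F'$ are two integrating factors, so $F/F'$ is a non-constant rational first integral, which must be ruled out for generic data; the endgame via unique factorization and Proposition \ref{lemajoua} is also essentially the paper's. The genuine gap is in how you rule out the first integral. The paper does this by quoting Jouanolou's Proposition (3.7.8): when $(\lambda_1:\dots:\lambda_m)\in\P^{m-1}(\C)-\P^{m-1}(\Q)$, the form $\omega$ has only finitely many algebraic leaves, whereas a rational first integral would produce infinitely many (its fibers). You instead try to classify the invariant hypersurfaces of a generic $\omega$ directly, and you correctly flag this as ``the main obstacle'' --- but the route you sketch does not close it. Proposing to ``exhibit a single logarithmic foliation without an extra invariant hypersurface, for instance with generic $\lambda_i$ and generic $F_i$'' is circular: genericity is precisely what has to be established, and producing one explicit, verifiable example is no easier than the finiteness statement itself. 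Without this input, your steps identifying the divisor of the integral with the $X_i$ and writing $F/F'=c\prod_i F_i^{a_i}$ have no foundation. (Once finiteness of algebraic leaves is available, your $2$-form analogue of Jouanolou's lemma becomes unnecessary.)

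A second, smaller defect: the locus where the argument actually applies --- residues with $\lam\notin\P^{m-1}(\Q)$, or your stronger condition that no two $\lambda_i$ are $\Q$-proportional --- is the complement of a countable union of proper closed subsets, not a Zariski open set, so you cannot literally ``prove injectivity on a dense Zariski open $U$'' this way, and it is not ``the open condition.'' The paper handles this by noting that the set $U_0$ on which the fiber of the induced map $\tilde\rho:U/\S(\d)\to \P^n(d)$ is a single point is Zariski dense, hence not contained in the branch divisor of $\tilde\rho$; this forces $\tilde\rho$ to have degree one and therefore to be birational. Your write-up needs this (or an equivalent) concluding step to convert injectivity on a dense, non-open set into generic injectivity.
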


\begin{proof} We will prove the existence of a non-empty Zariski open $U \subset X$ such that $\rho|_U$ is injective morphism
(up to order). It is easy to see, using that $\rho$ is a dominant map of irreducible varieties, that  the existence of such a $U$ 
implies that there exists a non-empty Zariski open $V \subset  \\L_n(\d)$ such that $\rho: \rho^{-1}(V) \to V$ is injective (up to order).

Consider the Zariski open $\S(\d)$-stable $U \subset V_n(\d)$ of points $(\lam, \F)$ such that
the $F_i$ are irreducible and all distinct. Hence, for $(\lam, \F),  (\lam', \F') \in U$ distinct (up to order),
$F = \prod_i F_i \ne F' = \prod_i F'_i$. Suppose $\mu(\lam, \F) = \omega = \mu(\lam', \F') \ne 0$. 
Then $\omega$ has two integrating factors $F$ and $F'$, and therefore has a rational first integral $f = F / F'$.
It follows that $\omega$ has infinitely many algebraic leaves (the fibers of $f$). 

On the other hand, if $(\lambda_1 : \dots : \lambda_m) \in \P^{m-1}(\C) - \P^{m-1}(\Q)$, 
Proposition (3.7.8) from \cite{Joua} implies that $\omega$ has only finitely many algebraic leaves.

Let $U_0 = \{(\lam, \F) \in U / \lam \in  \P^{m-1}(\C) - \P^{m-1}(\Q)\}$. 

Consider the restriction $\rho: U \to \\L_n(\d)$ and $\tilde \rho: U/\S(\d) \to \\L_n(\d)$ the induced map.

We obtain that if  
$\omega = \mu(\lam, \F)$ with $(\lam, \F) \in U_0$ then $\tilde \rho^{-1} (\omega) = \{(\lam, \F)\}$. 

This implies, first, that since $\rho$ has a fiber of dimension zero, $\mathrm{dim} (U) = \mathrm{dim} (\\L_n(\d))$ and the general fiber of $\rho$ is finite.
Also, since the (open analytic) set $U_0$ is Zariski dense in $U$ (because $\C - \Q$ is dense in $\C$), $U_0$ is not contained in the branch
divisor of $\tilde \rho$ and hence $\tilde \rho$ has degree one, and therefore is birational, as claimed.

\end{proof}

\medskip

\medskip

\section{Derivative of the parametrization.}    \label{derivative}

With the notation of Definition \ref{parametrization}, let 
$$(\lam, \F) = ((\lambda_1, \dots, \lambda_m), (F_1, \dots, F_m)) \in V_n(\d)$$
be a point in the vector space  $V_n(\d)$ domain of $\mu$. 

Let  $(\lam', \F') = ((\lambda'_1, \dots, \lambda'_m), (F'_1, \dots, F'_m)) \in V_n(\d)$
represent a tangent vector 
$$(\lam, \F) + \epsilon (\lam', \F'), \ \  \epsilon^2 = 0,$$ 
to $V_n(\d)$ at $(\lam, \F)$.

From the multilinearity of $\mu$ we easily obtain the following formula for its derivative:
$$d\mu(\lam, \F): V_n(\d) \to H^0(\P^n, \Omega^1_{\P^n}(d))$$
\begin{equation}
d\mu(\lam, \F)(\lam', \F') =  \sum_{i} \lambda'_i  \  \hat{F}_i  \  dF_i +
\sum_{i \ne k} \lambda_i   \ F'_k \ \hat{F}_{i k}  \  dF_i + \sum_{i} \lambda_i  \  \hat{F}_i  \  dF'_i \label{derivative}
\end{equation}

\begin{remark} \label{remark3}
By Proposition \ref{proposition1} b), the image of $\mu$ is contained in the variety of integrable projective forms 
$F_n(d) \subset H^0(\P^n, \Omega^1_{\P^n}(d))$. Hence for each $(\lam, \F) \in V_n(\d)$ 
we have an inclusion of vector spaces
\begin{equation}
\im d\mu(\lam, \F) \subset T_{F_n(d)}(\omega) = \{\alpha \in  H^0(\P^n, \Omega^1_{\P^n}(d)) / 
\ \omega \wedge d\alpha +  \alpha  \wedge d\omega  = 0\} \label{tangents}
\end{equation}
where $\omega= \mu(\lam, \F)$ and $T_{F_n(d)}(\omega)$ denotes de tangent space of
$F_n(d)$ at the point $\omega$. 

Our main task in Section \ref{surjectivity} will be to show that this inclusion is actually an equality,
for a sufficiently general  $(\lam, \F) \in V_n(\d)$.
\end{remark}

\begin{definition} \label{eta}
It is convenient now to introduce the following notation: 

$\omega = \mu(\lam, \F) =  \sum_{i=1}^m \lambda_i  \  \hat{F}_i  \  dF_i$ \emph{(a logarithmic one-form)},

$\eta = \omega / F =  \sum_{i=1}^m \lambda_i  \  dF_i/ F_i$ \emph{(the corresponding rational logarithmic one-form)},

$\alpha = d\mu(\lam, \F)(\lam', \F') = \sum_{i} \lambda'_i  \  \hat{F}_i  \  dF_i +
\sum_{i \ne k} \lambda_i   \ F'_k \ \hat{F}_{i k}  \  dF_i + \sum_{i} \lambda_i  \  \hat{F}_i  \  dF'_i$,

$\beta = \alpha / F= \sum_{i} \lambda'_i    \  dF_i / F_i +
\sum_{i \ne k} \lambda_i   \ F'_k /F_k \  dF_i/F_i + \sum_{i} \lambda_i   \  dF'_i/F_i$.

\end{definition}

\medskip

\begin{proposition} \label{proposition2} With the notations above,  we have
$$\beta = \eta' + (G/F) \eta + d(H/F)$$
where 

$\eta'=  \sum_{i=1}^m \lambda'_i  \  dF_i/ F_i$,  
   
$G = \sum_{i=1}^m  \hat{F}_i  \  F'_i \in S_n(d)$, and

$H = \sum_{i=1}^m \lambda_i  \  \hat{F}_i  \  F'_i \in S_n(d)$.

\end{proposition}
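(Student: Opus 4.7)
The plan is to do a direct computation. First, I would divide the expression for $\alpha$ by $F$ term by term using $\hat{F}_i/F = 1/F_i$ and $\hat{F}_{ik}/F = 1/(F_iF_k)$, which immediately gives
$$\beta = \sum_i \lambda'_i \frac{dF_i}{F_i} + \sum_{i\ne k} \lambda_i \frac{F'_k}{F_k}\cdot\frac{dF_i}{F_i} + \sum_i \lambda_i \frac{dF'_i}{F_i},$$
so the first sum is exactly $\eta'$ and the task reduces to showing the remaining two sums equal $(G/F)\eta + d(H/F)$.

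Second, I would expand $(G/F)\eta$. Since $\hat{F}_k/F = 1/F_k$, we have $G/F = \sum_k F'_k/F_k$, and therefore
$$\frac{G}{F}\,\eta = \sum_{i,k}\lambda_i\,\frac{F'_k}{F_k}\cdot\frac{dF_i}{F_i} = \sum_i\lambda_i\,\frac{F'_i}{F_i}\cdot\frac{dF_i}{F_i} + \sum_{i\ne k}\lambda_i\,\frac{F'_k}{F_k}\cdot\frac{dF_i}{F_i},$$
splitting the double sum along the diagonal $i=k$.

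Third, I would expand $d(H/F)$. Writing $H/F = \sum_i \lambda_i F'_i/F_i$ and applying the quotient rule to each term,
$$d(H/F) = \sum_i \lambda_i\,d\!\left(\frac{F'_i}{F_i}\right) = \sum_i \lambda_i\,\frac{dF'_i}{F_i} - \sum_i \lambda_i\,\frac{F'_i}{F_i}\cdot\frac{dF_i}{F_i}.$$
The key observation is that the diagonal contribution from $(G/F)\eta$ exactly cancels the second term here, leaving
$$(G/F)\eta + d(H/F) = \sum_{i\ne k}\lambda_i\,\frac{F'_k}{F_k}\cdot\frac{dF_i}{F_i} + \sum_i\lambda_i\,\frac{dF'_i}{F_i},$$
which matches $\beta - \eta'$, proving the identity.

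The calculation is routine; the only point that requires any care is the cancellation between the diagonal part of $(G/F)\eta$ and the $-F'_i\,dF_i/F_i^2$ terms produced by differentiating $F'_i/F_i$. Recognizing this cancellation is what makes the decomposition $\beta = \eta' + (G/F)\eta + d(H/F)$ natural: the first summand varies the coefficients $\lambda_i$, the second rescales $\eta$ by a function, and the third is an exact (rational) one-form — a decomposition that will be useful later when analyzing $\alpha$ modulo the tangent space to the orbit of the parametrization.
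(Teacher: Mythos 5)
Your computation is correct and is essentially the paper's own argument: the paper's proof consists of adding and subtracting $\sum_i \lambda_i F'_i \, dF_i / F_i^2$, which is exactly the diagonal term whose cancellation you identify, so you have simply written out in full the ``straightforward calculation'' the paper leaves to the reader.
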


\begin{proof} 
We add and substract to $\beta$ the sum $\sum_{i} \lambda_i  \  {F'}_i / F_i^2 \  dF_i$. 
A straightforward calculation gives the proposed expression.
\end{proof}

\medskip

\medskip

\section{Singular ideals of logarithmic one-forms and their resolution.}    \label{singularities}

For $\omega \in H^0(\P^n, \Omega^1_{\P^n}(d))$ denote $S(\omega) \subset \P^n$ the scheme of zeros of $\omega$
and $\cI= \cI_{\omega} \subset \cO_{\P^n}$ the corresponding ideal sheaf. Considering $\omega$ as a morphism 
$\cO_{\P^n} \to  \Omega^1_{\P^n}(d)$, $\cI$ is defined as the image of the dual morphism $T_{\P^n}(-d) \to \cO_{\P^n}$. 
Also, if $\omega = \sum_{i=0}^n a_i dx_i$ then $\cI$ corresponds to the homogeneous ideal generated by
$a_0, \dots, a_n \in S_n(d-1)$.

\medskip

We keep the notation of Definitions \ref{definition1} and \ref{definition notation}. 

Let $(\lam, \F) \in V_n(\d)$ and
$\omega = F . \sum_{i=1}^m \lambda_i   \  dF_i /F_i = \sum_{i=1}^m \lambda_i  \  \hat{F}_i  \  dF_i $ the corresponding
logarithmic one-form.

We denote
$$X_i = \{x \in \P^n / F_i(x)=0\}$$
the hipersurface defined by $F_i$. 

For $i \ne j$,
$$X_{ij} = X_i \cap X_j = \{x \in \P^n / F_i(x)=F_j(x) = 0\}$$
and, more generally, for a subset $A \subset \{1, \dots, m\}$,
$${X}_A =  \bigcap_{i \in A} X_i $$
For $1 \le r \le m$ we write
$$X^{(r)} = \bigcup_{|A| = r} X_A \,$$
and we shall use especially the following particular cases
$$X^{(1)} = \bigcup_{i=1}^m X_i,   \ \  \ \  X^{(2)} =  \bigcup_{i<j} X_{ij} ,  \  \ \  \  X^{(3)}  =  \bigcup_{i<j<k} X_{ijk}.$$

\begin{remark} \label{normalcrossing}
For our purposes we will be able to assume that the $F_i \in S_n(d_i)$ are general. We shall assume, more precisely, that
each $F_i$ is smooth irreducible and that  $X^{(1)}$ is a normal crossings divisor. Hence, each
${X}_A$ is a smooth complete intersection of codimension $|A|$, and thus the strata $X^{(r)}$ are of codimension $r$,
singular only along $X^{(r+1)}$.
\end{remark}

It is shown in \cite{CSV} and \cite{CS} that for $\omega$ logarithmic as above, with all $\lambda_i \ne 0$, 
$$S(\omega) = X^{(2)} \cup P$$
with $P \subset \P^n - X^{(1)}$ closed,  and $P$ is a finite set if $\omega$ is general. Let's revisit the argument, under the assumptions of Remark \ref{normalcrossing}.
First, since clearly $\hat{F}_i$ vanishes on $X^{(2)}$ for all $i$, we have $X^{(2)} \subset S(\omega)$. 
Since $\omega = \lambda_i \hat{F}_i dF_i$ on $X_i$, we see that $(X^{(1)}-X^{(2)}) \cap S(\omega) = \emptyset$. As for the zeros of $\omega$ in the complement 
of $X^{(1)}$, they are the same as the zeros of $\eta = \omega / F = \sum_{i=1}^m \lambda_i   \  dF_i /F_i $, which is a section
of the locally free sheaf $E = \Omega^1_{\P^n} (\mathrm{log} \ X^{(1)})$ of rank $n$ (see \cite{D}, \cite{GH}, \cite{PS}, \cite{EV}). 
Considering the $F_i$ (hence the divisor $X^{(1)}$) as fixed,
the space of global sections of $E$ has dimension $m-1$, and these sections correspond bijectively with
the residues $(\lambda_1, \dots, \lambda_m)$, satisfying $\sum_i d_i \lambda_i =0$, as it follows from taking cohomology in the exact sequence (\cite{D} or \cite{EV}, p. 170):
$$0 \to \Omega^1_{\P^n}  \to E \to \oplus_{i=1}^m \cO_{X_i} \to 0 .$$
For general $(\lambda_1, \dots, \lambda_m)$ as above, the corresponding section $\eta$ of $E$ has a finite set $P$ of simple zeros. 
Further, the cardinality of $P$ (see \cite{CSV}) is the degree of the top Chern class $c_n(E)$, computable from the exact sequence above.

\medskip \medskip \medskip

Coming back to the study of the resolution of the ideal $\cI_{\omega}$, let us denote
$$\cJ^{(r)} = \cI(X^{(r)}) \subset \cO_{\P^n}$$
the ideal sheaf of regular functions vanishing on $X^{(r)}$, and
$$J^{(r)} = \bigoplus_{k \in \Z} \ H^0(\P^n, \cJ^{(r)}(k))  \subset S_n$$
the corresponding saturated homogeneous ideal.

\medskip
Our arguments to prove stability of logarithmic forms will rely on the following results regarding the ideals $J^{(2)}$.

\begin{proposition} \label{proposition3} Under the hypothesis of Remark \ref{normalcrossing},

a) $J^{(2)}$ is generated by $\{ \hat{F}_i, \ 1 \le i \le m\}$.

b) The relations among the generators of a) are generated by
$$F_j \   \hat{F}_j  - F_i \ \hat{F}_i, \ \  1 \le i < j \le m,$$
and also by the subset
$$R_j =  F_j \ \hat{F}_j - F_1 \  \hat{F}_1, \ \  2 \le j \le m.$$

c) We have a resolution of $\cJ^{(2)}$
$$0 \to \cO(-d)^{m-1} \stackrel{\delta_0} {\longrightarrow} \bigoplus_{1 \le i \le m} \cO(-\hat{d}_i) \stackrel{\delta_1} {\longrightarrow} \cJ^{(2)} \to 0$$
where, denoting $\{e_i\}$ the respective canonical basis,  
$$\delta_0(e_j) =  F_j \ {e}_j -  F_1 \ {e}_1 \ \ \mathrm{for} \  \  2 \le j \le m,$$
$$\delta_1(e_i) = \hat{F}_i \ \ \mathrm{for} \  \ 1 \le i \le m.$$
\end{proposition}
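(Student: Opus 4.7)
The plan is to establish the sheaf-theoretic resolution and derive the graded-ideal statements (a) and (b) from it. The inclusion $(\hat{F}_1, \ldots, \hat{F}_m) \subseteq J^{(2)}$ is immediate: for any index $k$ and any pair $i<j$, at least one of $i \ne k$ or $j \ne k$ holds, so either $F_i$ or $F_j$ divides $\hat{F}_k$, hence $\hat{F}_k$ vanishes on every $X_{ij}$. The substantive part of (a) is the reverse inclusion, which I plan to reduce to a purely local computation using the normal crossings hypothesis of Remark \ref{normalcrossing}.

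For part (b) the argument is direct. Given a syzygy $\sum_{i=1}^m a_i \hat{F}_i = 0$, for each $j \ge 2$ reduction modulo $F_j$ gives $a_j \hat{F}_j \equiv 0 \pmod{F_j}$, because every $\hat{F}_i$ with $i \ne j$ is divisible by $F_j$. Since $F_j$ is irreducible (by the generality of Remark \ref{normalcrossing}) the ring $S_n/(F_j)$ is an integral domain, and $\hat{F}_j$ is a nonzero element there, so $a_j = c_j F_j$ for some $c_j \in S_n$. Substituting back and using $F_j \hat{F}_j = F = F_1 \hat{F}_1$ yields
\begin{equation*}
0 \;=\; a_1 \hat{F}_1 + \sum_{j \ge 2} c_j F \;=\; \Bigl( a_1 + F_1 \sum_{j \ge 2} c_j \Bigr)\hat{F}_1,
\end{equation*}
and cancelling $\hat{F}_1$ in the domain $S_n$ gives $a_1 = -F_1 \sum_{j \ge 2} c_j$. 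Therefore $(a_1, \ldots, a_m) = \sum_{j \ge 2} c_j R_j$, as desired.

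For the hard inclusion in (a) I compare stalks at each closed point $x \in \P^n$. After relabelling, let $F_1, \ldots, F_r$ be those $F_i$ vanishing at $x$; by the normal crossings assumption they form part of a regular system of parameters in $\cO_{\P^n, x}$, while the remaining $F_i$ are units there. For $r \le 1$ both stalks equal the whole local ring. For $r \ge 2$, up to units $\hat{F}_k$ equals $\prod_{\ell \le r,\, \ell \ne k} F_\ell$ when $k \le r$ and $\prod_{\ell \le r} F_\ell$ when $k > r$, so $(\hat{F}_1, \ldots, \hat{F}_m)_x$ is the ideal of $(r-1)$-fold subproducts of the regular sequence $F_1, \ldots, F_r$, while $(J^{(2)})_x = \bigcap_{1 \le i < j \le r} (F_i, F_j)$. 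The equality of these two ideals is the core algebraic lemma, which I would prove by induction on $r$ using repeatedly that each $F_k$ is a nonzerodivisor modulo any ideal generated by a subset of the remaining $F_\ell$'s. Once the two ideal sheaves coincide, the graded equality (a) follows by taking global sections of all twists and invoking $H^1(\P^n, \cO(\ell)) = 0$ for $n \ge 2$.

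Part (c) then packages (a) and (b): the composition $\delta_1 \circ \delta_0$ vanishes because $F_j \hat{F}_j - F_1 \hat{F}_1 = F - F = 0$; surjectivity of $\delta_1$ onto $\cJ^{(2)}$ is the content of (a); exactness at the middle term is (b); and $\delta_0$ is injective because its $i$-th coordinate for $i \ge 2$ is multiplication by the nonzero polynomial $F_i$. The main obstacle is the regular-sequence lemma at the core of part (a), that is, the identification $\bigcap_{i<j}(f_i, f_j) = (\hat f_1, \ldots, \hat f_r)$ for a regular sequence; once this is in place, (b) and (c) are essentially bookkeeping.
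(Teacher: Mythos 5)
Your proposal is correct, but it reaches the result by a genuinely different route in two places. For part (b) you give a direct, self-contained syzygy computation: reducing $\sum_i a_i \hat F_i = 0$ modulo each $F_j$ to get $a_j = c_j F_j$, then using $F_j\hat F_j = F = F_1\hat F_1$ and cancelling $\hat F_1$ to solve for $a_1$. The paper instead proves (b) and (c) in one stroke by recognizing the complex as the one attached to the maximal minors of the $m\times(m-1)$ matrix of $\delta_0$ (the minor omitting row $j$ is $\hat F_j$) and citing the Hilbert--Burch-type exactness criterion for ideals of codimension two; your elementary argument buys independence from that citation at the cost of a little bookkeeping, and it does yield the middle exactness needed for (c) once combined with the obvious injectivity of $\delta_0$. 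For the hard inclusion in (a), you localize: at a point where exactly $F_1,\dots,F_r$ vanish you reduce to the identity $\bigcap_{i<j}(f_i,f_j)=(\hat f_1,\dots,\hat f_r)$ for a regular sequence in $\cO_{\P^n,x}$, and then globalize via $H^1(\P^n,\cO(\ell))=0$ applied to the sheafified resolution. The paper runs the very same induction, but globally in $S_n$, using that each $\langle F_i,F_j\rangle$ is prime for generic $F_i$: given $G=\sum_{i<m}a_i\hat F_{im}$ lying in every $\langle F_i,F_m\rangle$, primality forces $a_i\in\langle F_i,F_m\rangle$, and substituting lands $G$ in $\langle\hat F_1,\dots,\hat F_m\rangle$. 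This avoids your extra globalization step (no $H^1$ argument is needed, since the graded statement is proved directly), while your local version trades primality for the nonzerodivisor properties of a regular sequence. The one item you leave unproved is precisely that core intersection lemma; be aware that its proof is not shorter in the local setting --- it is the same induction on $r$, with the key step being that $\hat f_{ir}$ is a nonzerodivisor (equivalently, a non-element of the prime $(f_i,f_r)$) modulo $(f_i,f_r)$ --- so you should write it out rather than treat it as standard. With that step supplied, your argument is complete and correct.
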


\begin{proof}

a)  We are assuming that the $F_i$ are generic. This implies in particular that each ideal $<F_i, F_j>$ is prime.
Then, $J^{(2)} = \bigcap_{1 \le i < j \le m} <F_i, F_j>$.  Let us denote $J = <\hat{F}_1, \dots, \hat{F}_m>$. 
It is clear that $J \subset J^{(2)}$. We shall prove that $J^{(2)}  \subset  J $ by induction on $m$. The case $m=2$ is trivial.
The inductive hypothesis, applied to $F_1, \dots, F_{m-1}$,  may be written as 
$\bigcap_{1 \le i < j \le m-1} <F_i, F_j> \ \subset \ <\hat{F}_{1m}, \dots, \hat{F}_{m-1 m}>$.
Take an element $G \in  \bigcap_{1 \le i < j \le m} <F_i, F_j> \ = \  \bigcap_{1 \le i < j \le m-1} <F_i, F_j> \  \cap  \
 \bigcap_{1 \le i < m} <F_i, F_m>$.
Using the inductive hypothesis, we may write $G = \sum_{i < m} a_i \hat{F}_{im}$, and we also have $G \in <F_i, F_m>$ for $i < m$.
Since $\hat{F}_{jm} \in <F_i, F_m>$ for $j \ne i$, it follows that $a_i \hat{F}_{im}  \in <F_i, F_m>$ for $i < m$. Since $<F_i, F_m>$ is prime,
we have $a_i = b_i F_i + c_i F_m$. Then, $G = \sum_{i < m} (b_i F_i + c_i F_m) \hat{F}_{im} = \sum_{i < m} (b_i \hat{F}_{m}+ c_i \hat{F}_{i}) \in J$, 
as wanted.

\medskip

\noindent
b) and c) Using the relations $R_j$ of b) we write down the complex in c). The proof will be complete if we show that this complex is exact.
The surjectivity of $\delta_1$ follows from a). Looking at the matrix of $\delta_0$ it is easy to see that
the determinant of the minor obtained by removing row $j$ is precisely $\hat {F}_j$, for $j=1, \dots, m$. Then this complex is the one associated
to the maximal minors of a matrix of size $m \times m-1$. Since in our case, by a), the ideal of minors vanishes in codimension two, the complex is exact
(see \cite{Ar} (5), \cite{E} (20.4)).
\end{proof}

\medskip

\begin{remark} \label{remark vanishing}
Let $X$ be an algebraic variety, $\cJ \subset \cO_X$ a sheaf of ideals,  and $E$ a locally free sheaf on $X$. Let $Y \subset X$ denote the subvariety
corresponding to $\cJ$. Taking global sections on the exact sequence $0 \to E \otimes \cJ \to E \to E \otimes \cO_Y = E|_Y \to 0$ we obtain an identification of
$H^0(X, E \otimes \cJ)$ with the global sections of $E$ vanishing on $Y$, that is, with the kernel of the restriction map $H^0(X, E) \to H^0(Y, E|_Y)$.
\end{remark}

\medskip

\begin{proposition} \label{expression} Let $\alpha \in \Omega^1_n(d)$ be a 1-form of degree $d$ in $\C^{n+1}$. 
Denote $\tilde X^{(2)}  \subset \C^{n+1}$ the cone over $X^{(2)}$.

a) $\alpha$ vanishes on  $\tilde X^{(2)}$ if and only if it may be written as
$$\alpha = \sum_{i=1}^m  \hat F_i  \alpha_i$$
for some $\alpha_i \in \Omega^1_n(d_i)$.

b) $\alpha$ is projective (see Section \ref{notation}) and vanishes on  $X^{(2)}$ if and only if it may be written as
$$\alpha = \sum_{i=1}^m  \lambda'_i  \hat F_i  dF_i  +  \sum_{i=1}^m   \hat F_i  \gamma_i $$
where $\lambda'_i \in \C$,  $\sum_{i=1}^m d_i \lambda'_i = 0$ and $\gamma_i \in H^0(\P^n, \Omega^1_{\P^n}(d_i))$
are projective 1-forms of respective degrees $d_i$.

\end{proposition}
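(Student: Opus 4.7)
\emph{Plan.} The statement is essentially the translation of Proposition~\ref{proposition3} into the language of 1-forms: part (a) will apply the generators of $J^{(2)}$ coefficient by coefficient, and part (b) will extract a $dF_i$-part from each $\alpha_i$ produced in (a) using the syzygies of Proposition~\ref{proposition3}(b) together with the Euler characterization of projective forms.

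For part (a), I would write $\alpha = \sum_{k=0}^n a_k\, dx_k$ with $a_k \in S_n(d-1)$. The form $\alpha$ vanishes on the affine cone $\tilde X^{(2)}$ if and only if each coefficient $a_k$ lies in the saturated ideal $J^{(2)}$. By Proposition~\ref{proposition3}(a), every such $a_k$ admits a decomposition $a_k = \sum_i h_{ki}\, \hat F_i$ with $h_{ki} \in S_n(d_i-1)$, the degrees being forced by homogeneity. Collecting terms yields $\alpha = \sum_i \hat F_i\, \alpha_i$ with $\alpha_i := \sum_k h_{ki}\, dx_k \in \Omega^1_n(d_i)$. The converse is immediate: each $\hat F_i$ vanishes on every pairwise intersection $X_i \cap X_j$, and hence on all of $X^{(2)}$ (and on its cone).

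For part (b), start from the decomposition $\alpha = \sum_i \hat F_i\, \alpha_i$ of part (a) and contract with the radial vector field $R$. Projectivity of $\alpha$ forces
\[
0 = \langle R, \alpha\rangle = \sum_{i=1}^m \hat F_i\, \langle R, \alpha_i\rangle,
\]
a syzygy among the generators $\hat F_1, \dots, \hat F_m$ of $J^{(2)}$. By Proposition~\ref{proposition3}(b) any such syzygy is an $S_n$-linear combination of the basic relations $R_j = F_j \hat F_j - F_1 \hat F_1$, so $\langle R, \alpha_i\rangle = c_i\, F_i$ for some $c_i \in S_n$ with $\sum_i c_i = 0$; comparing degrees (both $\langle R, \alpha_i\rangle$ and $F_i$ sit in $S_n(d_i)$) forces each $c_i$ to be a scalar. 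Setting $\lambda'_i := c_i / d_i$ and $\gamma_i := \alpha_i - \lambda'_i\, dF_i$, Euler's identity $\langle R, dF_i\rangle = d_i F_i$ yields $\langle R, \gamma_i\rangle = 0$, so $\gamma_i \in H^0(\P^n, \Omega^1_{\P^n}(d_i))$, while $\sum_i c_i = 0$ translates to $\sum_i d_i \lambda'_i = 0$. Substituting $\alpha_i = \lambda'_i\, dF_i + \gamma_i$ into $\alpha = \sum_i \hat F_i \alpha_i$ gives the claimed expression. The converse direction is routine: each $\hat F_i$ vanishes on $X^{(2)}$, while $\langle R, \hat F_i\, dF_i\rangle = d_i F$ combined with the constraint on the $\lambda'_i$ ensures projectivity.

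The only step that is not pure bookkeeping is the deduction, from the syzygy $\sum_i \hat F_i\, \langle R, \alpha_i\rangle = 0$, that each $\langle R, \alpha_i\rangle$ is a \emph{scalar} multiple of $F_i$; this uses the full strength of Proposition~\ref{proposition3}(b) and is the source of the linear constraint $\sum d_i \lambda'_i = 0$ appearing in the statement.
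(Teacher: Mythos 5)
Your proposal is correct and follows essentially the same route as the paper: both parts reduce to Proposition \ref{proposition3}, with (a) obtained from the generators of $J^{(2)}$ (you apply them coefficient by coefficient, the paper tensors the resolution and takes global sections --- the same computation) and (b) obtained exactly as in the paper by contracting with $R$, recognizing $\sum_i \hat F_i \langle R,\alpha_i\rangle = 0$ as a syzygy generated by the $R_j$, and using the degree count to force the coefficients to be scalars. The sign bookkeeping ($\sum_i c_i = 0$ giving $\sum_i d_i\lambda'_i = 0$) matches the paper's.
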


\begin{proof}
\noindent
a) By Remark \ref{remark vanishing}, we need to determine $H^0(\P^n, \Omega^1_{\P^n}(d) \otimes \cJ^{(2)})$.
The stated result then follows from Proposition \ref{proposition3} c), by tensoring with $\Omega^1_{\P^n}(d)$ and taking global sections.

\noindent
b) Suppose $\alpha$ is also projective, that is, $<R, \alpha> = 0$, where $R$ is the radial vector field. From a) we have
$$\sum_{i=1}^m  \hat F_i  <R, \alpha_i> = 0.$$
This is a relation among the $\hat F_i$ with coefficients $<R, \alpha_i>$ homogeneous of degrees $d_i$.
By Proposition \ref{proposition3} c), by tensoring with $\cO_{\P^n}(d)$ and taking global sections, 
this relation is a linear combination of the relations $R_i$ of Proposition \ref{proposition3} b), that is,
$$(<R, \alpha_1>, \dots, <R, \alpha_m>)= \sum_{2 \le i \le m} a_i R_i.$$
\noindent
This means that 
$$<R, \alpha_1> = (\sum_j a_j) F_1, \ \ \   <R, \alpha_i> = -a_i F_i, \ \  i = 2, \dots, m.$$
Hence $a_i$ has degree zero, i. e. $a_i \in \C$, for all $i$. Define $\lambda'_i = a_i / d_i$ for $i = 2, \dots, m$, 
$\lambda'_1 = - (\sum_j a_j) / d_1$
and $\gamma_i = \alpha_i - \lambda'_i dF_i$.
It follows that $<R, \gamma_i> = 0$ and hence $\alpha$ may be written as stated.
\end{proof}

\medskip

\medskip

\section{Surjectivity of the derivative and main Theorem.}    \label{surjectivity}

As in Remark \ref{remark3} we denote the derivative of $\mu$  at the point $\mu(\lam, \F)$
\begin{equation}
d\mu(\lam, \F): V_n(\d) \to T(\omega)  \label{derivative2}
\end{equation}
where $\omega= \mu(\lam, \F)$ and 
\begin{equation}
T(\omega)= T_{F_n(d)}(\omega) = \{\alpha \in  H^0(\P^n, \Omega^1_{\P^n}(d)) / 
\ \omega \wedge d\alpha +  \alpha  \wedge d\omega  = 0\} \label{tangentspace}
\end{equation}
denotes the Zariski tangent space of $F_n(d)$ at the point $\omega$. 

\medskip

Our main objective is to prove the following:

\medskip

\begin{theorem} \label{main} Let $n, d, m$ and $\d \in P(m, d)$ be as in Definition \ref{definition0}. Suppose $n \ge 3$.
Then the derivative $d\mu(\lam, \F): V_n(\d) \to T(\omega)$ is surjective for $(\lam, \F) \in V_n(\d)$ general.
\end{theorem}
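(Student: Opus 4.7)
The plan is to show that every $\alpha \in T(\omega)$ lies in the image of $d\mu(\lam,\F)$, by a three-step reduction that successively cancels $\alpha$ against the three natural summands of $d\mu$ recorded in Definition \ref{eta}: the $\lam'$-contribution $\sum_i \lambda'_i \hat F_i\,dF_i$, the $F'_k$-contribution $\sum_{i\ne k}\lambda_i F'_k\hat F_{ik}\,dF_i$, and the $dF'_i$-contribution $\sum_i \lambda_i \hat F_i\,dF'_i$.

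\emph{Step 1 (local behaviour of $\alpha$ on the codimension-two stratum).} At a general $p\in X_{ij}$ take local coordinates with $x_0=F_i$, $x_1=F_j$. The logarithmic structure yields $\omega = \hat F_{ij}(p)(\lambda_i x_1\,dx_0 + \lambda_j x_0\,dx_1) + O(x^2)$ and $d\omega(p) = (\lambda_j-\lambda_i)\hat F_{ij}(p)\,dx_0\wedge dx_1$, which is non-zero for generic $\lam$. Since $\omega(p)=0$, the identity $\omega\wedge d\alpha + \alpha\wedge d\omega = 0$ localises at $p$ to $\alpha(p)\wedge d\omega(p)=0$, forcing $\alpha(p)\in\mathrm{span}(dF_i(p),dF_j(p))$. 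Hence $\alpha|_{X_{ij}} = A_{ij}\,dF_i + B_{ij}\,dF_j$ as a section of the conormal subsheaf. Running the same leading-order analysis at a point of $X_{ijk}$ with the three coordinate pairs simultaneously upgrades this to the sharper vanishing $\alpha|_{X^{(3)}}=0$.

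\emph{Step 2 (killing $\alpha|_{X^{(2)}}$ by the $\F'$-part of $d\mu$).} The $\F'$-part of $d\mu$, restricted to $X_{ij}$, collapses to $\lambda_i \hat F_{ij}F'_j\,dF_i + \lambda_j \hat F_{ij}F'_i\,dF_j$: all other summands carry some $\hat F_l$ with $l\in\{i,j\}$ or both, and thus vanish on $X_{ij}$. I would therefore prescribe $F'_i|_{X_{ij}} := B_{ij}/(\lambda_j \hat F_{ij})|_{X_{ij}}$ for each pair $\{i,j\}$; the quotient is regular because $\alpha|_{X^{(3)}}=0$ makes $B_{ij}$ vanish along each $X_{ijk}$ to the order matching the simple zero of $\hat F_{ij}$ along $F_k=0$, and consistency of the two prescriptions for $F'_i$ on $X_{ijk}=X_{ij}\cap X_{ik}$ follows from a second-order expansion of the tangent identity near $p\in X_{ijk}$. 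Existence of a global lift $F'_i\in S_n(d_i)$ then reduces to vanishing of $H^1(\P^n,\cO(d_i)\otimes\cJ_{V_i})$, where $V_i=\bigcup_{j\ne i}X_{ij}$ is cut out by the regular sequence $(F_i,\hat F_i)$; the Koszul resolution $0\to\cO(-d)\to\cO(-d_i)\oplus\cO(-\hat d_i)\to\cJ_{V_i}\to 0$ (a special case of Proposition \ref{proposition3}(c)), combined with $H^j(\P^n,\cO(k))=0$ for $0<j<n$, gives this vanishing exactly when $n\ge 3$. Subtracting $d\mu(\lam,\F)(0,\F')$ from $\alpha$ we may thus assume $\alpha|_{X^{(2)}}=0$.

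\emph{Step 3 (decomposition via Proposition \ref{expression} and final identification).} With $\alpha$ vanishing on $X^{(2)}$, Proposition \ref{expression}(b) provides $\alpha = \sum_i \lambda'_i \hat F_i\,dF_i + \sum_i \hat F_i\,\gamma_i$ with $\lam'\in\Lambda(\d)$ and projective $\gamma_i\in H^0(\P^n,\Omega^1_{\P^n}(d_i))$. The first sum is exactly $d\mu(\lam,\F)(\lam',0)$, so it remains to realise the residue $\sum_i \hat F_i\,\gamma_i$ by the $\F'$-part of $d\mu$. Passing to $\beta=\alpha/F$ via Proposition \ref{proposition2}, the residual integrability becomes $\eta\wedge d\beta=0$ with $\eta=\omega/F$; coupled with the log de Rham structure of the normal-crossings divisor $X^{(1)}$ and the ambiguity in the decomposition of Proposition \ref{expression}(b) (the syzygies $F_i\hat F_i-F_j\hat F_j=0$ of Proposition \ref{proposition3}(b)), this pins down each $\gamma_i$ as $\lambda_i\,dF'_i$ modulo pieces already absorbed by the $F'_k$-contribution of $d\mu$. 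The hardest point is exactly this final identification: inferring $\gamma_i\equiv\lambda_i\,dF'_i$ from the integrability constraint is where the balanced-versus-unbalanced dichotomy of $\d$ from Section 9 becomes unavoidable, because coincidences among the $d_i$'s introduce extra permutation symmetries and additional syzygies among the $\gamma_i$'s which force the analysis to bifurcate into cases.
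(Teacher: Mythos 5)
Your overall strategy---stratified analysis of $\alpha$ on $X^{(3)}$ and $X^{(2)}$, subtraction of an element of the image of $d\mu$ to reduce to $\alpha = \sum_i \hat F_i \gamma_i$, and a cohomological gluing lemma valid for $n \ge 3$---is the same as the paper's, and Steps 1 and 2 track Propositions \ref{proposition6}, \ref{proposition7}, \ref{proposition8} and Lemma \ref{lema} reasonably faithfully. One point in Step 2 is under-argued: the compatibility of your local prescriptions $F'_i|_{X_{ij}} = B_{ij}/(\lambda_j \hat F_{ij})$ on the triple loci $X_{ijk}$ is exactly the identity $A_{sr}/\lambda_s = A_{tr}/\lambda_t$ on $X_{rst}$, which the paper obtains not from a ``second-order expansion'' at a point but from the full global computation of $d\beta \wedge \eta = 0$ restricted successively to $X_r$, $X_{rs}$, $X_{rst}$ (equations \ref{BB} and \ref{AA}). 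This can be repaired, but as written it is an assertion, not a proof.

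The genuine gap is Step 3. After reducing to $\alpha = \sum_i \hat F_i \gamma_i \in T(\omega)$, you state that the integrability constraint ``pins down each $\gamma_i$ as $\lambda_i\, dF'_i$ modulo pieces already absorbed,'' but you give no argument, and this is precisely the hardest part of the theorem (Proposition \ref{proposition9}, Theorem \ref{prop balanced} and Theorem \ref{prop balanced 2}). Moreover your diagnosis of why the balanced/unbalanced dichotomy enters is wrong: it has nothing to do with coincidences among the $d_i$'s or permutation symmetries (those only affect generic injectivity of $\rho$, Section 6). The dichotomy is governed by the sign of $d_1 - \hat d_1$: running the tangent-space computation on $\alpha = \sum_j \hat F_j \gamma_j$ produces correction terms $D_{ij} \in S_n(d_j - e\hat d_j)$ and $\epsilon_j \in \Omega^1_n(d_j - e\hat d_j)$, which vanish for degree reasons exactly when $\d$ is balanced ($2d_i < d$ for all $i$), giving $\gamma_j = \lambda'_j\, dF_j$ immediately. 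When $\d$ is unbalanced these terms survive for $j=1$, the conclusion $\gamma_1 = \lambda_1\, dF'_1$ is in general false as stated, and the paper must instead run an iterative descent producing a remainder divisible by successively higher powers of $\hat F_1$, terminating after $r(\d) = [d_1/\hat d_1]$ steps and yielding $F'_1 = \sum_e G_e \hat F_1^{\,e}$. None of this mechanism is present in your proposal, so the proof is incomplete at its decisive step.
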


\begin{proof}
The proof will be obtained through various steps, including several Propositions of independent interest.
\end{proof}

\medskip

\begin{theorem} \label{main2}
If $n \ge 3$, the set of logaritmic forms $\\L_n(\d) \subset  \mathcal F_n(d)$, as in Definition \ref{definition component}, 
is an irreducible component of $\mathcal F_n(d)$. Furthermore, the scheme $\mathcal F_n(d)$ is \emph{reduced}
generically along  $\\L_n(\d)$.
\end{theorem}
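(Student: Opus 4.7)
The plan is to deduce Theorem \ref{main2} directly from Theorem \ref{main} via a comparison of Zariski tangent space dimensions at a general point of $\\L_n(\d)$. Since all the serious algebraic work (the surjectivity of the derivative) is already packaged in Theorem \ref{main}, the remaining deduction is essentially a dimension count together with two formal inclusions of tangent spaces.

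First I would compute $\dim \\L_n(\d)$. By Proposition \ref{injectivity}, $\rho$ is generically injective modulo the finite action of $\S(\d)$, hence
$$\dim \\L_n(\d) = \dim \P^n(\d) = (m-2) + \sum_{i=1}^m (\dim S_n(d_i) - 1).$$
Passing to the affine cone $L_n(\d) \subset H^0(\P^n, \Omega^1_{\P^n}(d))$, this yields $\dim L_n(\d) = \dim V_n(\d) - m$; equivalently, the generic fiber of $\mu$ has dimension $m$, as one sees concretely from the torus action $(\lam, F_1, \dots, F_m) \mapsto (\lam / \prod_j t_j,\ t_1 F_1, \dots, t_m F_m)$ with $(t_1, \dots, t_m) \in (\C^*)^m$ that fixes $\mu$ pointwise.

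Next I would choose $(\lam, \F) \in V_n(\d)$ generic enough that $\omega = \mu(\lam, \F)$ lies in the smooth locus of the irreducible variety $\\L_n(\d)$ and the surjectivity of Theorem \ref{main} holds at $(\lam, \F)$; each condition cuts out a nonempty Zariski open subset, so their intersection is nonempty. Because $\mu$ factors through $L_n(\d) \subset F_n(d)$, one has the chain of subspaces of $H^0(\P^n, \Omega^1_{\P^n}(d))$:
$$\im d\mu(\lam, \F) \ \subset \ T_{L_n(\d)}(\omega) \ \subset \ T_{F_n(d)}(\omega) = T(\omega).$$
Theorem \ref{main} forces the outer inclusion to be an equality, so all three spaces coincide. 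Combined with the dimension count above, this gives $\dim T_{F_n(d)}(\omega) = \dim L_n(\d)$.

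The conclusion then reads: $F_n(d)$ is smooth at $\omega$, and since $L_n(\d)$ is an irreducible subvariety of $F_n(d)$ through $\omega$ of dimension equal to $\dim T_{F_n(d)}(\omega)$, it must locally coincide with the unique irreducible component of $F_n(d)$ at $\omega$. Projectivizing recovers both halves of Theorem \ref{main2}: $\\L_n(\d)$ is an irreducible component of $\mathcal F_n(d)$, and $\mathcal F_n(d)$ is reduced at $\omega$, hence generically reduced along $\\L_n(\d)$. The only genuine obstacle is Theorem \ref{main} itself, to be established in Section \ref{surjectivity} using the resolution of the ideal of $X^{(2)}$ (Proposition \ref{proposition3}) and the description of projective $1$-forms vanishing on $X^{(2)}$ (Proposition \ref{expression}); the passage from it to Theorem \ref{main2} involves no further surprises.
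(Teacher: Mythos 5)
Your proposal is correct and is essentially the paper's own proof: the authors dispose of Theorem \ref{main2} in one line by invoking ``the same arguments as in \cite{CP} or \cite{CPV}'', and those arguments are exactly the tangent-space dimension count you spell out (generic injectivity of $\rho$ gives $\dim L_n(\d) = \dim V_n(\d) - m$, surjectivity of $d\mu$ from Theorem \ref{main} gives $\dim T(\omega) = \dim L_n(\d)$ at a general $\omega$, whence smoothness of $F_n(d)$ there and the component/reducedness conclusions). No gaps; you have simply written out the deduction the paper delegates to its references.
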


\begin{proof} Follows from Theorem \ref{main} by the same arguments as in \cite{CP}  or \cite{CPV}.
\end{proof}

\medskip
\medskip
Let us now start with several steps towards the proof of Theorem  \ref{main}.

\medskip
\medskip

\begin{remark} \label{remark4} A typical element $\alpha$  in the image of $d\mu(\lam, \F)$ as in \ref{derivative}
$$\alpha =  \sum_{i} \lambda'_i  \  \hat{F}_i  \  dF_i +
\sum_{i \ne j} \lambda_i   \ F'_j \ \hat{F}_{i j}  \  dF_i + \sum_{i} \lambda_i  \  \hat{F}_i  \  dF'_i $$
may be written
$$\alpha =  \sum_{i}  \hat{F}_i \ (\lambda'_i  \  dF_i  + \lambda_i  \  dF'_i)  +
\sum_{i \ne j} \lambda_i   \  F'_j \ \hat{F}_{i j}  \  dF_i $$
or
$$\alpha =  \sum_{i}  \hat{F}_i \  (\lambda'_i   \ dF_i  + \lambda_i   \  dF'_i)  +
\sum_{i < j}  \hat{F}_{i j}  \ ( \lambda_i   \ F'_j \  dF_i + \lambda_j   \ F'_i  \  dF_j) $$
Let us observe that the first sum is zero on  $X^{(2)}$ (hence on  $X^{(3)}$) and the second sum is zero on  $X^{(3)}$.
The idea of our proofs, leading to Theorem \ref{main}, will be based on this observation.
\end{remark}

\medskip

Our strategy to characterize the elements $\alpha \in T(\omega)$ will be this: first we shall determine $\alpha|_{X^{(3)}}$, next we shall determine $\alpha|_{X^{(2)}}$, and finally we show that $\alpha$ may be written as in \ref{derivative} for some $\lam'$ and $\F'$, 
and therefore $\alpha$  belongs to the image of $d\mu(\lam, \F)$.

\medskip

In order to carry out this plan, let us start with some Propositions, some of them of independent interest.

\medskip

\begin{proposition} \label{proposition beta}
For $\omega \in F_n(d)$ and $\alpha \in  H^0(\P^n, \Omega^1_{\P^n}(d))$,
the following conditions are equivalent:

a) $\omega \wedge d\alpha +  \alpha  \wedge d\omega  = 0$, that is, $\alpha \in T(\omega)$.

b) $d\omega \wedge d\alpha = 0$.

\noindent
Further,  for $\omega$ \emph{logarithmic}, $\eta = \omega / F$ and $\beta = \alpha / F$,

c) $\eta \wedge d\beta = 0$.

d) $d(\eta \wedge \beta) = 0$.

\end{proposition}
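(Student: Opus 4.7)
My plan is to establish the four equivalences by short exterior-algebra computations, relying only on the Cartan--Euler formalism for projective homogeneous forms and on the structural identities for logarithmic forms proved in Proposition \ref{proposition1.5}, namely $d\omega=(dF/F)\wedge\omega$, $F\,d\omega=dF\wedge\omega$, and $d\eta=0$.

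For the equivalence (a) $\Leftrightarrow$ (b), the forward direction is obtained by applying $d$ to (a): using $d^2=0$ and the fact that the two $2$-forms $d\omega$ and $d\alpha$ commute in the exterior algebra (sign $(-1)^{2\cdot 2}=+1$), the resulting expression collapses to $2\,d\omega\wedge d\alpha=0$. Note that integrability $\omega\wedge d\omega=0$ is not used here. For the converse I would contract the $4$-form in (b) with the radial vector field $R$. Since $\omega$ and $\alpha$ are projective and homogeneous of weighted degree $d$, Cartan's formula $L_R=i_R d+d\,i_R$ together with $i_R\omega=i_R\alpha=0$ yields $i_R\,d\omega=d\cdot\omega$ and $i_R\,d\alpha=d\cdot\alpha$. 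Expanding $i_R(d\omega\wedge d\alpha)$ by the graded Leibniz rule then gives $d\cdot(\omega\wedge d\alpha+\alpha\wedge d\omega)$, using $d\omega\wedge\alpha=\alpha\wedge d\omega$; for $d>0$ this recovers (a), and for $d=0$ everything is trivial.

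For (a) $\Leftrightarrow$ (c), I would compute $\eta\wedge d\beta$ directly from $\eta=\omega/F$ and $d\beta=F^{-1}d\alpha-F^{-2}\,dF\wedge\alpha$. A straightforward expansion yields $\eta\wedge d\beta = F^{-3}\bigl(F\,\omega\wedge d\alpha - \omega\wedge dF\wedge\alpha\bigr)$, and the logarithmic identity $\omega\wedge dF = -F\,d\omega$ from Proposition \ref{proposition1.5}(b) simplifies this to $F^{-2}(\omega\wedge d\alpha+\alpha\wedge d\omega)$. Vanishing of this rational form is equivalent to vanishing of its polynomial numerator, which is exactly (a). Finally, (c) $\Leftrightarrow$ (d) is immediate from the graded Leibniz rule: $d(\eta\wedge\beta)=d\eta\wedge\beta-\eta\wedge d\beta=-\eta\wedge d\beta$, since $d\eta=0$ by Proposition \ref{proposition1.5}(b).

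I do not anticipate any genuine obstacle; the only slightly nontrivial ingredient is the radial-contraction argument used in (b) $\Rightarrow$ (a), which is the standard device for upgrading the weaker bilinear identity $d\omega\wedge d\alpha=0$ to the full tangent-space equation by exploiting the projectivity and homogeneity of $\omega$ and $\alpha$.
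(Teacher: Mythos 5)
Your proposal is correct and follows essentially the same route as the paper: exterior differentiation for (a)$\Rightarrow$(b), contraction with the radial vector field (via $i_R\omega=i_R\alpha=0$ and $L_R=i_Rd+di_R$) for (b)$\Rightarrow$(a), the direct expansion of $\eta\wedge d\beta$ using $dF\wedge\omega=F\,d\omega$ for (a)$\Leftrightarrow$(c), and closedness of $\eta$ for (c)$\Leftrightarrow$(d). The paper merely states these steps without the computations, which you have carried out correctly.
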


\begin{proof}
From a) one obtains b) by applying exterior derivative. Conversely, from b) one obtains a) by contracting with the radial vector field.
The equivalence with c) follows from Proposition \ref{proposition1.5} by a straightforward calculation. The equivalence of c) and d) follows from the
fact that $\eta$ is closed.
\end{proof}

\medskip

\begin{proposition} \label{proposition6}
Let $\omega= \mu(\lam, \F)$ be a logarithmic form and $\alpha \in T(\omega)$. Assume that $X^{(1)}$ is normal crossings, with smooth irreducible components $X_i$, as in Remark \ref{normalcrossing}. Then $\alpha|_{X^{(3)}} = 0$, that is, $\alpha(x) = 0$ for all $x \in X^{(3)}$.
\end{proposition}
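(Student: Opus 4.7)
The plan is to work in local analytic coordinates near a generic point of $X^{(3)}$ and extract $\alpha(p)=0$ from the leading-order terms of the identity $\omega\wedge d\alpha+\alpha\wedge d\omega=0$ characterizing $T(\omega)$. Since $X^{(3)}=\bigcup_{i<j<k}X_{ijk}$ is a finite union of smooth irreducible complete intersections (by the normal crossings assumption of Remark \ref{normalcrossing}) and $\alpha$ is algebraic, it suffices to prove $\alpha(p)=0$ at a generic $p\in X_{ijk}$ for each triple; by relabeling we may take $\{i,j,k\}=\{1,2,3\}$ and $p\in X_{123}\setminus X^{(4)}$. Choose analytic coordinates $(z_1,\dots,z_n)$ at $p$ so that $F_\ell=u_\ell z_\ell$ for $\ell=1,2,3$ with local units $u_\ell$, while each $F_\ell$ for $\ell>3$ is itself a unit near $p$.

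In these coordinates the local ideal of $X^{(2)}$ at $p$ is $(z_1z_2,\,z_1z_3,\,z_2z_3)$, and Proposition \ref{expression}(a) guarantees that every coefficient of $\omega$ lies in this ideal; in particular all coefficients of $\omega$ vanish to order two at $p$. A direct calculation starting from Proposition \ref{proposition1.5}(a) shows that the degree-one Taylor part of $d\omega$ at $p$ has the form
\[
d\omega_{(1)}=A\,z_3\,dz_1\wedge dz_2+B\,z_2\,dz_1\wedge dz_3+C\,z_1\,dz_2\wedge dz_3,
\]
where $A$, $B$, $C$ are scalar multiples of $\lambda_2-\lambda_1$, $\lambda_3-\lambda_1$, $\lambda_3-\lambda_2$ respectively by nonvanishing products of unit values at $p$. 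For $\lam$ general in $\Lambda(\d)$, the three residues are pairwise distinct and hence $ABC\neq 0$.

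Writing $\alpha=\sum_k a_k(z)\,dz_k$ and expanding $\omega\wedge d\alpha+\alpha\wedge d\omega=0$ in powers of $z$ at $p$, the order-one part receives no contribution from $\omega\wedge d\alpha$ (since $\omega$ vanishes to order two), so the identity reduces to $\alpha(p)\wedge d\omega_{(1)}=0$, a polynomial 3-form of degree one in $z_1,z_2,z_3$. The coefficient of $dz_1\wedge dz_2\wedge dz_3$ yields $Ca_1(p)=Ba_2(p)=Aa_3(p)=0$, while the coefficients of $dz_1\wedge dz_2\wedge dz_k$, $dz_1\wedge dz_3\wedge dz_k$ and $dz_2\wedge dz_3\wedge dz_k$ for $k>3$ give $Aa_k(p)=Ba_k(p)=Ca_k(p)=0$. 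Since $ABC\neq 0$ we deduce $a_k(p)=0$ for every $k$, i.e.\ $\alpha(p)=0$; algebraicity of $\alpha$ together with Zariski density of $X_{ijk}\setminus X^{(4)}$ in $X_{ijk}$ then propagates this vanishing to all of $X^{(3)}$. The main obstacle, and the only point where genericity of $\lam$ is actually needed, is the nonvanishing of $A,B,C$: should two of $\lambda_1,\lambda_2,\lambda_3$ coincide then the leading part of $d\omega$ degenerates and one would have to pass to a finer analysis, which the general-position hypothesis underlying Theorem \ref{main} allows us to avoid.
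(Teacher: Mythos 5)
Your proof is correct and is essentially the paper's argument in local-analytic dress: both reduce the tangency condition at points of $X^{(2)}$ to $\alpha\wedge d\omega=0$ (since $\omega|_{X^{(2)}}=0$), use that near a triple point $d\omega$ is governed by the three terms $(\lambda_j-\lambda_i)\hat F_{ij}\,dF_i\wedge dF_j$, and conclude $\alpha(p)=0$ because the intersection of the three $2$-planes $\langle dF_i,dF_j\rangle$ is zero by normal crossings, under the same genericity assumption that the $\lambda_i$ are pairwise distinct. The only cosmetic difference is that you obtain the three planarity constraints by a first-order Taylor expansion at $p\in X_{ijk}$, where the paper derives $\alpha\wedge dF_i\wedge dF_j=0$ on the open stratum $U_{ij}$ and passes to the closure $X_{ij}$.
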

\begin{proof}
Let us denote, for  $1 \le i<j \le m$,
$$U_{ij} := X_{ij} - X^{(3)} = \{x \in \P^n / F_i(x) = F_j(x) = 0, \ F_k(x) \ne 0 \ \mathrm{for} \  k \notin \{i, j\}\}$$
and, similarly, for $ 1 \le i<j<k \le m$,
$$U_{ijk} := X_{ijk} - X^{(4)}$$
Since the set of zeros of $\alpha$ is closed, it is enough to see that $\alpha$ is zero on $X^{(3)} - X^{(4)}$, which is the disjoint union of the $U_{ijk}$. Notice that $dF_i, dF_j, dF_k$ are linearly independent on $U_{ijk}$ because of the normal-crossings hypothesis. Since clearly $\omega|_{X^{(2)}} = 0$, the relation $\omega \wedge d\alpha +  \alpha  \wedge d\omega  = 0$ reduces to $\alpha(x) \wedge d\omega(x) = 0$ for each $x \in X^{(2)}$. We may assume that $\lambda_i \ne \lambda_j$ for $i \ne j$ without losing generality. 
Then it follows from Proposition \ref{proposition1.5} a) that 
\begin{equation}
\alpha \wedge dF_i \wedge dF_j = 0  \label{a}
\end{equation}
on $U_{ij}$, and hence on its closure $X_{ij}$. This means that 
\begin{equation}
\alpha(x) \in \C.dF_i(x) + \C. dF_j(x) \subset \Omega^1_{\P^n}(x) \label{b}
\end{equation} 
for $x \in X_{ij}$.
Therefore, for $x \in U_{ijk}$ we have 
$$\alpha(x) \in (\C.dF_i(x) + \C.dF_j(x)) \cap (\C.dF_i(x) + \C.dF_k(x)) \cap (\C.dF_j(x) + \C.dF_k(x)).$$
Due to the normal crossings hypothesis this last intersection of two-dimensional subspaces is zero, hence $\alpha(x) = 0$ for $x \in U_{ijk}$, as wanted.
\end{proof}

\medskip

\begin{proposition} \label{proposition7} With the notation and hypothesis of Proposition \ref{proposition6}, for each ordered pair
$(i, j)$ with $1 \le i, j \le m$ and $i \ne j$, there exists $A_{ij} \in S_n(d_j)$ such that
$$\alpha = \hat {F}_{ij} \ (A_{ij} \ dF_i + A_{ji} \  dF_j)   \ \mathrm{on}  \  X_{ij}. $$
\end{proposition}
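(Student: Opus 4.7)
The plan is to restrict $\alpha$ to the smooth complete intersection $X_{ij}$ and proceed in three steps: (a) use the constraint $\alpha\wedge dF_i\wedge dF_j=0$ from the proof of Proposition \ref{proposition6} to express $\alpha|_{X_{ij}}$ as a linear combination of $dF_i,dF_j$ via the conormal sequence; (b) use the vanishing of $\alpha$ along $X^{(3)}$ (again Proposition \ref{proposition6}) to force divisibility of the coefficients by $\hat F_{ij}|_{X_{ij}}$; and (c) invoke a cohomological lifting result valid for $n\ge 3$ to promote the quotients to polynomial coefficients on all of $\P^n$.

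For step (a), equation (\ref{a}) in the proof of Proposition \ref{proposition6} gives $\alpha\wedge dF_i\wedge dF_j=0$ on $X_{ij}$. Because $X_{ij}$ is the smooth complete intersection of $F_i=F_j=0$, the conormal sequence
\[
0\to \mathcal N^*_{X_{ij}/\P^n}\to \Omega^1_{\P^n}|_{X_{ij}}\to\Omega^1_{X_{ij}}\to 0
\]
identifies $\mathcal N^*_{X_{ij}/\P^n}\simeq \cO_{X_{ij}}(-d_i)\oplus \cO_{X_{ij}}(-d_j)$ with $dF_i,dF_j$ as generating sections, and the kernel of the sheaf map $\wedge(dF_i\wedge dF_j):\Omega^1_{\P^n}|_{X_{ij}}\to\Omega^3_{\P^n}|_{X_{ij}}$ equals the image of $\mathcal N^*$ (fiberwise, since $X_{ij}$ is smooth and hence the two locally free subsheaves coincide). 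This yields a global presentation
\[
\alpha|_{X_{ij}}=B\,dF_i+C\,dF_j,\qquad B\in H^0(X_{ij},\cO(d-d_i)),\ C\in H^0(X_{ij},\cO(d-d_j)).
\]

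For step (b), Proposition \ref{proposition6} forces $\alpha=0$ on $X^{(3)}\cap X_{ij}=\bigcup_{k\ne i,j}X_{ijk}=\{\hat F_{ij}|_{X_{ij}}=0\}$. At a generic point $x\in X_{ijk}$, the normal crossings hypothesis makes $dF_i(x),dF_j(x),dF_k(x)$ linearly independent in $\Omega^1_{\P^n}(x)$, so the relation $B(x)\,dF_i(x)+C(x)\,dF_j(x)=0$ forces $B(x)=C(x)=0$. Hence $B,C$ vanish along each reduced divisor $X_{ijk}\subset X_{ij}$, and because normal crossings makes $\sum_{k\ne i,j}X_{ijk}$ a reduced effective divisor on the smooth $X_{ij}$, we may write $B=\hat F_{ij}|_{X_{ij}}\cdot A'_{ij}$ and $C=\hat F_{ij}|_{X_{ij}}\cdot A'_{ji}$ with $A'_{ij}\in H^0(X_{ij},\cO(d_j))$ and $A'_{ji}\in H^0(X_{ij},\cO(d_i))$.

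For step (c), it suffices to show that the restriction $S_n(d_j)\to H^0(X_{ij},\cO(d_j))$ (and similarly with $d_i$) is surjective, so that $A'_{ij}$ lifts to some $A_{ij}\in S_n(d_j)$. Twisting the Koszul resolution
\[
0\to\cO(-d_i-d_j)\to\cO(-d_i)\oplus\cO(-d_j)\to\cO_{\P^n}\to\cO_{X_{ij}}\to 0
\]
by $\cO(d_j)$ and chasing long exact sequences reduces this to the vanishings $H^1(\P^n,\cO(k))=0$ (automatic for $n\ge 2$) and $H^2(\P^n,\cO(-d_i))=0$, the latter holding precisely because $n\ge 3$. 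The main obstacle I anticipate is step (b): turning the pointwise vanishing of $\alpha$ on each $X_{ijk}$ into divisibility of $B,C$ by the full product $\hat F_{ij}|_{X_{ij}}$ relies crucially on the geometric genericity of Remark \ref{normalcrossing} (so that the divisor is reduced inside the smooth $X_{ij}$); the cohomological lifting in step (c) is the technical point tying the argument to the hypothesis $n\ge 3$.
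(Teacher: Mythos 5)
Your proposal is correct and follows essentially the same route as the paper: both use the conormal/Koszul presentation of $\Omega^1_{\P^n}|_{X_{ij}}$ together with the pointwise condition $\alpha(x)\in\C\,dF_i(x)+\C\,dF_j(x)$ to write $\alpha|_{X_{ij}}=B\,dF_i+C\,dF_j$, then use $\alpha|_{X^{(3)}}=0$ and normal crossings to extract the factor $\hat F_{ij}$, and finally lift to honest homogeneous polynomials via surjectivity of restriction for the codimension-two complete intersection (which the paper gets by citing Hartshorne, Ex.\ III.5.5, and you verify directly with the twisted Koszul complex). The only cosmetic difference is the order of the last two steps (the paper lifts to polynomials before dividing by $\hat F_{ij}$, you divide first and then lift), which changes nothing.
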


\begin{proof} This will follow easily combining that $X_{ij}$ is a smooth complete intersection of codimension two in a proyective space, and the fact that $\alpha|_{X^{(3)}} = 0$ that we just proved.

Suppose $J = <A, B>$ is the ideal generated by general homogenous polynomials $A$ and $B$ of respective degrees $a$ and $b$. 
Let $Y \subset \P^n$ be the set of zeroes of $J$. We have an exact sequence (\cite{H}, II.8)
$$0 \to J/J^2 = \cO_Y(-a) \oplus \cO_Y(-b) \stackrel{\delta} {\longrightarrow}   \Omega^1_{\P^n}|_Y \to \Omega^1_Y  \to 0$$
Tensoring with $\cO_Y(d)$ and taking global sections we obtain that an element 
$\alpha|_Y \in H^0(Y, \Omega^1_{\P^n}(d)|_Y)$ which belongs to the image of $H^0(\delta)$, may be written as $A' dA + B' dB$ for
$A' \in H^0(Y, \cO_Y(d-a))$ and $B' \in H^0(Y, \cO_Y(d-b))$.  By \cite{H}, Ex. III (5.5), $A'$ and $B'$ are represented by homogeneous polynomials of respective degrees $d-a$ and $d-b$. 

For each $(i, j)$, $\alpha|_{ X_{ij}}$ belongs to the image of the corresponding $H^0(\delta)$, by \ref{b}.
Hence, we know that $\alpha =  A'_{ij} \ dF_i + A'_{ji} \  dF_j$   on   $X_{ij}$, for homogeneous polynomials
$A'_{ij}$ of degree $d - d_i$. Now, $\alpha|_{X^{(3)}} = 0$ by Proposition \ref{proposition6}, and in  particular $\alpha = 0$ on $X_{ijk}$ for all $k$.
Since $dF_i$ and $dF_j$ are linearly independent at all points of $X_{ijk}$ by the normal crossings hypothesis, it follows that  $A'_{ij}$ and $A'_{ji}$ are divisible by $\hat {F}_{ij}$ and we obtain the claim.
\end{proof}

\medskip

\begin{corollary} \label{corollary prime} With the notation of Proposition \ref{proposition7}, define
$$\alpha' = \sum_{i < j} \hat {F}_{ij} \ (A_{ij} \ dF_i + A_{ji} \  dF_j) \in \Omega^1_n(d)$$
Then $\alpha'|_{\tilde X^{(2)}} = \alpha|_{\tilde X^{(2)}}$. 

\medskip
\noindent
(But notice that $\alpha'$ may not satisfy \ref{tangents}; see the Proof of Corollary \ref{any alpha 3}).
\end{corollary}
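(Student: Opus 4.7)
The plan is to prove the equality pointwise on the cone $\tilde X^{(2)}$ by restricting to each irreducible stratum $\tilde X_{k\ell}$ of codimension two separately and showing that only one summand of $\alpha'$ survives the restriction. Since $\tilde X^{(2)} = \bigcup_{k<\ell} \tilde X_{k\ell}$, it suffices to verify $\alpha'|_{\tilde X_{k\ell}} = \alpha|_{\tilde X_{k\ell}}$ for every pair $k<\ell$.

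Fix such a pair $(k,\ell)$ and consider the defining sum
\[
\alpha' = \sum_{i<j} \hat F_{ij}\,(A_{ij}\,dF_i + A_{ji}\,dF_j).
\]
For any pair $(i,j) \ne (k,\ell)$, at least one of the indices $k,\ell$ lies outside $\{i,j\}$, so $F_k$ or $F_\ell$ appears as a factor of $\hat F_{ij} = \prod_{r\notin\{i,j\}} F_r$. Consequently $\hat F_{ij}$ vanishes identically on $\tilde X_{k\ell} \subset \tilde X_k \cap \tilde X_\ell$, and the entire $(i,j)$-summand drops out upon restriction. The only term that can contribute is the one indexed by $(k,\ell)$, giving
\[
\alpha'|_{\tilde X_{k\ell}} = \bigl(\hat F_{k\ell}\,(A_{k\ell}\,dF_k + A_{\ell k}\,dF_\ell)\bigr)\big|_{\tilde X_{k\ell}}.
\]
By Proposition \ref{proposition7}, this is precisely $\alpha|_{X_{k\ell}}$ (pulled back to the cone), so $\alpha' = \alpha$ at every point of $\tilde X_{k\ell}$.

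Ranging over all $k<\ell$, we conclude $\alpha'|_{\tilde X^{(2)}} = \alpha|_{\tilde X^{(2)}}$, as claimed. The argument is essentially a bookkeeping check, and no real obstacle arises: the only ingredient beyond combinatorics is the factorization property of $\hat F_{ij}$ combined with the pointwise description of $\alpha$ on each $X_{ij}$ supplied by Proposition \ref{proposition7}. The subtlety the parenthetical remark alludes to, that $\alpha'$ need not itself satisfy the tangency condition \ref{tangents}, will only matter later when the construction is used inside the surjectivity proof; it plays no role in the present verification.
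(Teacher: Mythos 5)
Your proof is correct and follows essentially the same route as the paper's one-line argument: the key observation in both is that $\hat F_{ij}$ vanishes on $X_{k\ell}$ whenever $\{i,j\}\ne\{k,\ell\}$, so that on each stratum $X_{k\ell}$ only the $(k,\ell)$-summand of $\alpha'$ survives and Proposition \ref{proposition7} identifies it with $\alpha$. Your write-up merely makes explicit the bookkeeping that the paper leaves to the reader.
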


\begin{proof} 
Follows from Proposition \ref{proposition7} since $\hat {F}_{ij}$ vanishes on $X_{hk}$ if $\{h, k\} \ne \{i, j\}$.
\end{proof}

\medskip

\begin{corollary} \label{corollary any alpha}  We keep the notation of Proposition \ref{proposition7}.
Then any $\alpha \in T(\omega)$ may be written as
\begin{eqnarray*} 
\alpha &=&  \sum_{i < j} \hat {F}_{ij} \ (A_{ij} \ dF_i + A_{ji} \  dF_j) + \sum_{i}  \hat F_i \  \alpha_i \\
 { } &=& \sum_{i \ne j} \hat {F}_{ij} \ A_{ij} \ dF_i  + \sum_{i}  \hat F_i \  \alpha_i.
 \end{eqnarray*} 
 for some $\alpha_i \in \Omega^1_n(d_i)$.
\end{corollary}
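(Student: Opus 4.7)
The plan is to reduce the claim to the two previously established tools: Corollary \ref{corollary prime}, which produces a 1-form $\alpha'$ that agrees with $\alpha$ on $\tilde X^{(2)}$, and Proposition \ref{expression} a), which describes exactly those forms vanishing on $\tilde X^{(2)}$.

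More precisely, let $\alpha \in T(\omega)$ and let
$$\alpha' = \sum_{i<j} \hat F_{ij}\,(A_{ij}\, dF_i + A_{ji}\, dF_j)$$
be the auxiliary form built out of the polynomials $A_{ij}$ produced by Proposition \ref{proposition7}. By Corollary \ref{corollary prime} we have $\alpha'|_{\tilde X^{(2)}} = \alpha|_{\tilde X^{(2)}}$, hence
$$\alpha - \alpha' \;\in\; \Omega^1_n(d)\quad\text{vanishes on the cone } \tilde X^{(2)}.$$
Now apply Proposition \ref{expression} a) to $\alpha - \alpha'$: this yields forms $\alpha_i \in \Omega^1_n(d_i)$ such that
$$\alpha - \alpha' = \sum_{i=1}^m \hat F_i\,\alpha_i,$$
which gives the first of the two displayed formulas. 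The second expression is just a cosmetic rewriting: using the symmetry $\hat F_{ij} = \hat F_{ji}$, we regroup
$$\sum_{i<j} \hat F_{ij}\,(A_{ij}\, dF_i + A_{ji}\, dF_j) = \sum_{i\ne j} \hat F_{ij}\, A_{ij}\, dF_i,$$
which finishes the proof.

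There is no real obstacle here since the work has already been done in Proposition \ref{proposition6}, Proposition \ref{proposition7} and Corollary \ref{corollary prime} (characterization of $\alpha$ on the various strata) and in Proposition \ref{expression} a) (resolution-theoretic description of projective forms vanishing on $\tilde X^{(2)}$); the corollary is simply the assembly of these two ingredients. The only minor subtlety worth flagging is that the $\alpha_i$ produced by Proposition \ref{expression} a) are not required to be projective and need not be unique, reflecting the fact that the decomposition of $\alpha$ as in the statement is not unique—a freedom that will be essential in the subsequent step of the main argument, where one further imposes integrability of $\omega \wedge \alpha$ modulo $F$ to pin down the $\alpha_i$.
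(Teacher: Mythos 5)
Your argument is correct and is essentially identical to the paper's proof: take $\alpha'$ from Corollary \ref{corollary prime}, note that $\alpha-\alpha'$ vanishes on $\tilde X^{(2)}$, and apply Proposition \ref{expression} a) to write the difference as $\sum_i \hat F_i\,\alpha_i$; the second displayed formula is just the regrouping $\sum_{i<j}\hat F_{ij}(A_{ij}\,dF_i+A_{ji}\,dF_j)=\sum_{i\ne j}\hat F_{ij}A_{ij}\,dF_i$. Nothing further is needed.
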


\begin{proof} 
For $\alpha \in T(\omega)$, take $\alpha'$ as in Corollary \ref{corollary prime}. Then $\alpha - \alpha' \in \Omega^1_n(d)$ 
vanishes on $\tilde X^{(2)}$ and hence, by Proposition \ref{expression} a),
may be written as $\sum_{i=1}^m  \hat F_i  \alpha_i$ for some $\alpha_i \in \Omega^1_n(d_i)$. 
\end{proof} 

\medskip

\noindent
We would like to obtain further information on the $A_{ij}$'s and the $\alpha_i$'s. For this, we will use again that $\alpha$ 
satisfies $\omega \wedge d\alpha + \alpha \wedge d\omega = 0$ as in  \ref{tangents}.

\medskip

\begin{proposition} \label{proposition8} Suppose $n \ge 3$. With notation as in Corollary \ref{corollary any alpha}, 
for each $j = 1, \dots, m$ there exists $F'_{j} \in S_n(d_j)$  such that
$$A_{ij} = \lambda_i  \ F'_{j} \ \ \mathrm{\ on \ } X_{ij}$$
for all $(i, j)$  with $1 \le i, j \le m$ and $i \ne j$. 
\end{proposition}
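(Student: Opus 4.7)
The plan is to extract from the tangency condition $\omega \wedge d\alpha + \alpha \wedge d\omega = 0$ pairwise compatibility identities for the $A_{ij}$ along the codimension-$3$ strata $X_{ijk}$, and then to lift the resulting global data to a single polynomial $F'_j$ by a cohomology vanishing on $\P^n$ that relies on $n \ge 3$.

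\emph{Compatibility step.} Fix distinct indices $i, j, k$ and work in a neighbourhood of a generic point of $U_{ijk} = X_{ijk} - X^{(4)}$, where $\hat F_{ijk} := F/(F_i F_j F_k)$ is a unit. The factorisations $\hat F_p = F_q F_r \hat F_{ijk}$ and $\hat F_{pq} = F_r \hat F_{ijk}$, valid for $\{p,q,r\} = \{i,j,k\}$, together with Corollary \ref{corollary any alpha}, show that $\alpha$ vanishes to order exactly $1$ and $\omega$ to order exactly $2$ along $X_{ijk}$. Expanding $\omega, d\omega, \alpha$ and $d\alpha$ to leading order modulo $\langle F_i, F_j, F_k\rangle^3$, and isolating the coefficients of $F_jF_k$, $F_iF_k$, $F_iF_j$ in the leading piece of $\omega \wedge d\alpha + \alpha \wedge d\omega$, which lies in $\mathrm{Sym}^2(\langle F_i, F_j, F_k\rangle/\langle F_i, F_j, F_k\rangle^2) \otimes \bigwedge^3 \Omega^1_{\P^n}|_{X_{ijk}}$, yields the three identities
\[
\lambda_j A_{ki} \equiv \lambda_k A_{ji}, \qquad \lambda_k A_{ij} \equiv \lambda_i A_{kj}, \qquad \lambda_i A_{jk} \equiv \lambda_j A_{ik} \pmod{\langle F_i, F_j, F_k\rangle}.
\]
In particular, for fixed second index $j$, the restrictions $A_{ij}|_{X_{ij}}/\lambda_i$ and $A_{kj}|_{X_{kj}}/\lambda_k$ agree on the overlap $X_{ij} \cap X_{kj} = X_{ijk}$.

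\emph{Gluing and lifting.} For fixed $j$, the compatible family $\{A_{ij}|_{X_{ij}}/\lambda_i\}_{i \ne j}$ patches into a single section $s_j \in H^0(Y_j, \cO_{Y_j}(d_j))$ on $Y_j := \bigcup_{i \ne j} X_{ij} = V(F_j, \hat F_j)$. Since $\{F_j, \hat F_j\}$ is a regular sequence, the ideal sheaf $\cI_{Y_j}$ has the Koszul resolution
\[
0 \to \cO(-d_j - \hat d_j) \to \cO(-d_j) \oplus \cO(-\hat d_j) \to \cI_{Y_j} \to 0.
\]
Twisting by $\cO(d_j)$ and invoking the standard vanishing $H^i(\P^n, \cO(k)) = 0$ for $0 < i < n$ (which uses $n \ge 3$) gives $H^1(\P^n, \cI_{Y_j}(d_j)) = 0$. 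Hence $s_j$ lifts to a polynomial $F'_j \in S_n(d_j)$, producing $A_{ij} \equiv \lambda_i F'_j \pmod{\langle F_i, F_j\rangle}$ for all $i \ne j$, which is the claim.

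The main obstacle is the compatibility step: the second-order expansion of $\omega \wedge d\alpha + \alpha \wedge d\omega$ modulo $\langle F_i, F_j, F_k\rangle^3$ produces many cross-terms from $\omega^{(2)} \wedge d\alpha^{(0)}$ and $\alpha^{(1)} \wedge d\omega^{(1)}$, whose wedge-sign bookkeeping is nontrivial; the key observation is that they collapse precisely into the three clean identities above. Once those are in hand, the gluing and cohomological lifting are routine, with the hypothesis $n \ge 3$ entering only in the final cohomology vanishing.
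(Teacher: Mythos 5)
Your proof is correct, and its skeleton coincides with the paper's: both derive the compatibility identity $\lambda_k A_{ji} = \lambda_j A_{ki}$ on $X_{ijk}$ from the tangency condition, then glue the family $\{A_{ij}/\lambda_i\}_{i\ne j}$ into a section on $\bigcup_{i\ne j}X_{ij}$ and lift it to $S_n(d_j)$ by a cohomology vanishing that needs $n\ge 3$. The execution of each step is genuinely different, though. For the compatibility step the paper does not expand $\omega\wedge d\alpha + \alpha\wedge d\omega$ in the $\langle F_i,F_j,F_k\rangle$-adic filtration; it passes to the equivalent rational condition $d\beta\wedge\eta=0$ with $\beta=\alpha/F$, $\eta=\omega/F$ (Proposition \ref{proposition beta}), multiplies by $F^2$ and restricts successively to $X_r$, then $X_{rs}$, then $X_{rst}$ --- producing along the way the intermediate equations \ref{BB} and \ref{AA}, which are reused verbatim in Proposition \ref{proposition9} and Theorem \ref{prop balanced 2}. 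Your direct second-order expansion is self-contained and checks out: the coefficient of $F_jF_k$ in the degree-two graded piece is indeed $(\lambda_jA_{ki}-\lambda_kA_{ji})\,dF_i\wedge dF_j\wedge dF_k$ up to a unit, so the three identities fall out as claimed; the price is that it leaves behind none of the formulas the paper needs later. For the lifting step the paper proves Lemma \ref{lema} (surjectivity of $H^0(X_j,\cO(d_j))\to H^0(D_j,\cO(d_j))$ for the divisor $D_j=\cup_{i\ne j}X_{ij}$ inside the smooth hypersurface $X_j$, via an $H^1$ vanishing on $X_j$), whereas you resolve $\cI_{Y_j}$ for the complete intersection $Y_j=V(F_j,\hat F_j)$ in $\P^n$ by the Koszul complex and use $H^2(\P^n,\cO(-\hat d_j))=0$; both are valid, and both silently rely on the same small fact --- that sections on the components $X_{ij}$ agreeing on the pairwise intersections $X_{ijk}$ glue to a section of $\cO_{Y_j}$, which holds here because the configuration is simple normal crossings. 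The only cosmetic slips on your side are the claim that the vanishing orders are ``exactly'' $1$ and $2$ (only the inequalities are needed, or true in general) and the implicit use of $\lambda_i\ne 0$, which the paper also assumes for general $(\lam,\F)$.
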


\begin{proof}

The calculation is nicer working with the equivalent condition
 $d\beta \wedge \eta = 0$, where $\beta = \alpha / F$ and $\eta = \omega/F$, see Proposition \ref{proposition beta} c).
 We have:
 $$\beta = \sum_{i \ne j}  \frac{A_{ij}}{ F_j} \ \frac{dF_i}{F_i} + \sum_{i}  \frac{\alpha_i}{F_i}$$
 $$d\beta = \sum_{i \ne j}  d(\frac{A_{ij}}{ F_j})  \wedge \frac{dF_i}{F_i} + \sum_{i}  d(\frac{\alpha_i}{F_i})$$
 \begin{eqnarray*}
 d\beta \wedge \eta \ =  \sum_{i \ne j, k}  \lambda_k \ d(\frac{A_{ij}}{ F_j})  \wedge \frac{dF_i}{F_i}  \wedge \frac{dF_k}{F_k} +
  \sum_{i, k} \lambda_k \ d(\frac{\alpha_i}{F_i})  \wedge \frac{dF_k}{F_k} = \\
 \sum_{i \ne j \ne k}  \lambda_k \ d(\frac{A_{ij}}{ F_j})  \wedge \frac{dF_i}{F_i}  \wedge \frac{dF_k}{F_k} +
  + \sum_{i \ne j}  \lambda_j \ d(\frac{A_{ij}}{ F_j})  \wedge \frac{dF_i}{F_i}  \wedge \frac{dF_j}{F_j} + \\
   \sum_{i \ne k} \lambda_k \ d(\frac{\alpha_i}{F_i})  \wedge \frac{dF_k}{F_k} +
  \sum_{k} \lambda_k \ d(\frac{\alpha_k}{F_k})  \wedge \frac{dF_k}{F_k}= 0
  \end{eqnarray*}
  
  \noindent
 Let's replace 
$$d(\frac{A_{ij}} {F_j}) = \frac{dA_{ij}}{F_j} - \frac{A_{ij}}{F_j} \frac{dF_j}{F_j}, 
\ \ d(\frac{\alpha_i}{F_i}) = \frac{d\alpha_i}{F_i} - \frac{dF_i}{F_i} \wedge \frac{\alpha_i}{F_i}$$ 
and multiply by $F^2$. After some straightforward calculation we obtain:

\begin{eqnarray*} 
F \ \sum_{i \ne j \ne k}  \lambda_k \     \hat {F}_{ijk}  \ dA_{ij}  \wedge dF_i  \wedge dF_k +
 \sum_{i \ne k}  \lambda_k \   \hat {F}_k \ \hat {F}_{ik}  \ dA_{ik}  \wedge dF_i  \wedge dF_k + \\
\sum_{i \ne j \ne k}  \lambda_k  \   \hat {F}_j \  \hat {F}_{ijk} \ A_{ij}  \ dF_i \wedge dF_j  \wedge  dF_k + \\
F \ \sum_{j \ne k}  \lambda_k  \   \hat {F}_{jk}    \  d\alpha_j \wedge dF_k +  
  \sum_{k}  \lambda_k  \   \hat {F}_{k}^2    \  d\alpha_k \wedge dF_k + \\
 \sum_{j \ne k}  \lambda_k  \   \hat {F}_{j} \ \hat {F}_{jk} \ \alpha_j \wedge dF_j \wedge dF_k \ = \ 0
\end{eqnarray*} 

\noindent
Now we choose $r$ such that $1 \le r \le m$  and restrict  to $X_r$, that is, we reduce modulo $F_r$. We get:
\begin{eqnarray} 
\hat {F}_r \ ( \sum_{i \ne r}  \lambda_r \ \hat {F}_{ir}  \ dA_{ir}  \wedge dF_i  \wedge dF_r + 
\sum_{i \ne k \ne r}  \lambda_k  \  \hat {F}_{irk} \ A_{ir}  \ dF_i \wedge dF_r  \wedge  dF_k + \nonumber \\
\lambda_r   \ \hat {F}_{r}   \  d\alpha_r \wedge dF_r + 
 \sum_{k \ne r}  \lambda_k  \  \hat {F}_{rk} \ \alpha_r \wedge dF_r \wedge dF_k) \ = \ 0 \label{BB}
\end{eqnarray} 
Since $\hat {F}_r$ is not zero on the irreducible variety $X_r$, we may cancel this factor out.

\medskip

\noindent
Next, choose $s$ such that $1 \le s \le m$, $s \ne r$,  and further restrict  to $X_r \cap X_s = X_{rs}$ to obtain:
\begin{eqnarray} 
 \lambda_r \ \hat {F}_{sr}  \ dA_{sr}  \wedge dF_s  \wedge dF_r + 
\sum_{k \ne r \ne s}  \lambda_k  \  \hat {F}_{srk} \ A_{sr}  \ dF_s \wedge dF_r  \wedge  dF_k + \nonumber \\
\sum_{i  \ne r \ne s}  \lambda_s  \  \hat {F}_{irs} \ A_{ir}  \ dF_i \wedge dF_r  \wedge  dF_s \ + \ 
\lambda_s  \  \hat {F}_{rs} \ \alpha_r \wedge dF_r \wedge dF_s \ = \ 0  \label{AA}
\end{eqnarray} 

\medskip

\noindent
And, once more, choose $t$ such that $1 \le t \le m$, $t \ne s \ne r$. Restricting  to $X_r \cap X_s \cap X_t= X_{rst}$
we get:
$$ \hat {F}_{rst}  (\lambda_t    \ A_{sr} - \lambda_s   \ A_{tr})  \ dF_r \wedge dF_s  \wedge  dF_t = 0$$
By the genericity of the $F_i$'s,  $X_{rst}$ is irreducible, and we may cancel out the factor $ \hat {F}_{rst} \ne 0$. 
By the normal crossing hypothesis
we may also cancel out $dF_r \wedge dF_s  \wedge  dF_t \ne 0$. 

\medskip

\noindent
Therefore,
\begin{equation}
A_{sr} /  \lambda_s =  A_{tr} / \lambda_t    \  \  \mathrm{\ on \ }   X_{rst} \label{A}
\end{equation}
for all distinct $1 \le r, s, t \le m$.

\medskip

\noindent
Let us fix $r$, $1 \le r \le m$. We consider the natural restriction maps
$$S_n(d_r) = H^0(\P^n, \cO(d_r)) \to H^0(X_{r}, \cO(d_r))  \to  H^0(X_{rs}, \cO(d_r))  \to  H^0(X_{rst}, \cO(d_r)).$$
For $s = 1, \dots, m$, $s \ne r$, the polynomials $A_{sr} / \lambda_s \in S_n(d_r)$ (all of the same degree $d_r$) define, 
by restriction to the hypersurfaces $X_{rs} \subset X_r$, sections $A_{sr} / \lambda_s \in  H^0(X_{rs}, \cO(d_r))$. 
By \ref{A} these sections coincide on the pairwise intersections $X_{rs} \cap X_{rt}  = X_{rst}$.
Hence this collection defines a section of $\cO(d_r)$ on the
(reducible) variety $D_r = \cup_ {s \ne r} X_{rs} \subset X_r$. By  Lemma \ref{lema} below, 
with $X = X_r$ and $D = D_r$, there exists $F'_r \in S_n(d_r)$, such that
$A_{sr} /  \lambda_s = F'_r$ on $X_{rs}$, for each $s \ne r$, as claimed. 

\medskip

\end{proof}

\begin{lemma} \label{lema} Let $n \ge 3$, and let $X \subset \P^n$ be a smooth irreducible hypersurface of degree $e$.
For $m \ge 1$ and  $i =1, \dots, m$ let $D_i \subset X$ be smooth irreducible distinct hypersurfaces. 
We consider the (reducible) hypersurface $D = \cup_{1 \le i \le m} D_i \subset X$. Then the natural restriction map
$$H^0(X, \cO(e)) \to H^0(D, \cO(e))$$
is surjective.
\end{lemma}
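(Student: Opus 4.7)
The strategy is to express the cokernel of the restriction as a cohomology group and to show it vanishes. From the tautological short exact sequence
\[
0 \to \cI_{D/X} \otimes \cO_X(e) \to \cO_X(e) \to \cO_D(e) \to 0
\]
on $X$, the cokernel of $H^0(X, \cO_X(e)) \to H^0(D, \cO_D(e))$ is a subspace of $H^1(X, \cI_{D/X}(e))$, so surjectivity will follow from the vanishing of this group.

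The next task is to identify $\cI_{D/X}$. Each $D_i$ is an irreducible Cartier divisor on $X$, and since the $D_i$ are pairwise distinct, at any point $p \in X$ their local defining equations are pairwise coprime in the UFD $\cO_{X,p}$; hence the ideal of the reduced union $D$ is locally the product of the ideals of the components, and globally
\[
\cI_{D/X} = \cO_X(-D_1 - \dots - D_m).
\]
In the setting in which the lemma will be applied (and more generally whenever $n \ge 4$, where the Grothendieck--Lefschetz theorem forces $\mathrm{Pic}(X) = \Z \cdot \cO_X(1)$), each $D_i$ is cut out on $X$ by a homogeneous polynomial $G_i$ of some degree $a_i$; this identifies $\cI_{D/X}(e) \cong \cO_X(e - a)$ with $a = \sum a_i$, and reduces the problem to proving $H^1(X, \cO_X(e - a)) = 0$.

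For the final step I would use the standard exact sequence
\[
0 \to \cO_{\P^n}(k - e) \to \cO_{\P^n}(k) \to \cO_X(k) \to 0
\]
with $k = e - a$. The long cohomology sequence places $H^1(X, \cO_X(k))$ between $H^1(\P^n, \cO(k))$ and $H^2(\P^n, \cO(k - e))$, both of which vanish precisely because $n \ge 3$ (Bott's theorem: $H^i(\P^n, \cO(j)) = 0$ for $0 < i < n$). Thus $H^1(X, \cO_X(k)) = 0$ for every integer $k$, finishing the proof. The genuine subtlety lies in the middle step: the identification of $\cI_{D/X}$ as a twist of $\cO_X(1)$ could fail for a special smooth surface in $\P^3$ whose Picard rank exceeds one, but in the concrete application each $D_i = X_{rs}$ is manifestly cut out by the polynomial $F_s$, so no difficulty arises.
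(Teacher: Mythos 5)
Your proof is essentially the paper's: both tensor the ideal-sheaf sequence of $D$ in $X$ by $\cO_X(e)$ and reduce the claim to the vanishing of $H^1(X,\cO_X(e-a))$, which the paper cites from Hartshorne, Ex.\ III.5.5, and you derive directly from the hypersurface sequence for $X\subset\P^n$ and the vanishing of intermediate cohomology of line bundles on $\P^n$. The only difference is that you make explicit, and correctly flag as the one delicate point, the identification $\cI_{D/X}\cong\cO_X(-a)$, which the paper simply asserts as ``$\cO_X(-D)=\cO_X(-d)$ for some $d$''; since in the application each $D_i$ is cut out on $X$ by one of the $F_s$, both arguments go through.
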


\begin{proof} In the exact sequence $0 \to \cO_X(-D) \to \cO_X \to  \cO_D  \to 0$ we tensor  by $\cO_X(e)$
and take cohomology. Since $\cO_X(-D)(e) =  \cO_X(-d)(e) =  \cO_X(e-d) $ for some $d$, and $H^1(X, \cO_X(e-d)) = 0$
(see e. g. \cite{H}, Exercise III, (5.5)), we obtain the claim.
\end{proof}

\medskip

\begin{corollary} \label{any alpha 2} Let $n \ge 3$. Any $\alpha \in T(\omega)$ may be written as 

$$
\alpha =  \sum_{i \ne j} \lambda_i \ \hat {F}_{ij} \ F'_j \ dF_i  + \sum_{i}  \hat F_i \  \alpha_i.
$$
 for some $F'_i \in S_n(d_i)$ and $\alpha_i \in \Omega^1_n(d_i)$. 
\end{corollary}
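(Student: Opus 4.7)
The plan is to start from the expression in Corollary \ref{corollary any alpha},
$$\alpha = \sum_{i \ne j} \hat{F}_{ij} A_{ij} dF_i + \sum_i \hat{F}_i \alpha_i,$$
and to promote each pointwise identity $A_{ij} = \lambda_i F'_j$ on $X_{ij}$ (provided by Proposition \ref{proposition8}) to a global equality in $S_n(d_j)$, at the cost of absorbing the error into a new choice of the $\alpha_i$. Since the second summand of the display above can absorb any term of the form $\sum_i \hat{F}_i \gamma_i$ with $\gamma_i \in \Omega^1_n(d_i)$, it is enough to show that the discrepancies between $A_{ij}$ and $\lambda_i F'_j$ produce precisely such corrections.

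Explicitly, let $F'_j \in S_n(d_j)$ be as in Proposition \ref{proposition8}, and set $B_{ij} := A_{ij} - \lambda_i F'_j \in S_n(d_j)$. By construction $B_{ij}$ vanishes on $X_{ij}$, and under the normal-crossings hypothesis of Remark \ref{normalcrossing} the ideal $\langle F_i, F_j\rangle$ is prime and equal to the saturated ideal of $X_{ij}$. Hence one may write
$$B_{ij} = P_{ij} F_i + Q_{ij} F_j$$
with $P_{ij} \in S_n(d_j - d_i)$ and $Q_{ij} \in S_n(0) = \C$, adopting the convention $S_n(k) = 0$ for $k < 0$ (so that when $d_i > d_j$ the term $P_{ij}$ is automatically zero and $B_{ij} = Q_{ij} F_j$). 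Using the identities $\hat{F}_{ij} F_i = \hat{F}_j$ and $\hat{F}_{ij} F_j = \hat{F}_i$, one gets
$$\hat{F}_{ij} A_{ij} dF_i = \lambda_i \hat{F}_{ij} F'_j dF_i + \hat{F}_j P_{ij} dF_i + \hat{F}_i Q_{ij} dF_i.$$

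Summing over the ordered pairs $i \ne j$ and regrouping the last two families of terms by the factor $\hat{F}_k$, one obtains
$$\alpha = \sum_{i \ne j} \lambda_i \hat{F}_{ij} F'_j dF_i + \sum_k \hat{F}_k \Bigl(\alpha_k + \sum_{i \ne k} P_{ik} dF_i + \sum_{j \ne k} Q_{kj} dF_k\Bigr).$$
A degree count confirms that $P_{ik} dF_i$ has degree $(d_k - d_i) + d_i = d_k$ and that $Q_{kj} dF_k$ has degree $d_k$, so the parenthesized expression is an element of $\Omega^1_n(d_k)$; relabeling it as the new $\alpha_k$ gives the desired formula. There is no serious obstacle here: once Proposition \ref{proposition8} is in hand the result is a bookkeeping exercise, the only mildly delicate point being the degenerate case $d_i > d_j$, which is handled automatically by the convention above.
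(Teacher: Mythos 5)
Your argument is correct and is exactly the bookkeeping the paper leaves implicit: its proof of this corollary is the single line ``Follows from Corollary \ref{corollary any alpha} and Proposition \ref{proposition8}.'' Writing $A_{ij}-\lambda_i F'_j$ as an element of the prime (hence saturated) ideal $\langle F_i,F_j\rangle$ and absorbing the resulting terms into the $\alpha_k$ via $\hat F_{ij}F_i=\hat F_j$, $\hat F_{ij}F_j=\hat F_i$ is the intended route, and your degree checks (including the degenerate case $d_i>d_j$) are all in order.
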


\begin{proof}
Follows from Corollary \ref{corollary any alpha} and Proposition \ref{proposition8}.
\end{proof}

\medskip

\begin{corollary} \label{any alpha 3} Let $n \ge 3$. Any $\alpha \in T(\omega)$ may be written as 

$$
\alpha =  \bar \alpha + \sum_{i}  \hat F_i \  \gamma_i.
$$
where $\bar \alpha$ belongs to the image of $d\mu(\lam, \F)$,  $\gamma_i \in \Omega^1_n(d_i)$
and $ \sum_{i}  \hat F_i \  \gamma_i \in T(\omega)$.
\end{corollary}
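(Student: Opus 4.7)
The statement is a bookkeeping consequence of Corollary \ref{any alpha 2} and the explicit derivative formula \eqref{derivative}. By Corollary \ref{any alpha 2}, any $\alpha \in T(\omega)$ can be written as
$$\alpha = \sum_{i \ne j} \lambda_i \ \hat F_{ij} \ F'_j \ dF_i + \sum_i \hat F_i \ \alpha_i$$
for some $F'_j \in S_n(d_j)$ and $\alpha_i \in \Omega^1_n(d_i)$. The first double sum is precisely the ``cross term'' that appears in the formula \eqref{derivative} for $d\mu(\lam, \F)(\lam', \F')$ when $\lam' = 0$ and $\F' = (F'_1, \dots, F'_m)$. Since $0 \in \Lambda(\d)$ trivially, the pair $(0, \F')$ lies in $V_n(\d)$, so it is legitimate to set
$$\bar \alpha := d\mu(\lam, \F)(0, \F') = \sum_{i \ne j} \lambda_i \ \hat F_{ij} \ F'_j \ dF_i + \sum_i \lambda_i \ \hat F_i \ dF'_i,$$
which by construction belongs to the image of $d\mu(\lam, \F)$.

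Subtracting, the first sums cancel and we are left with
$$\alpha - \bar \alpha = \sum_i \hat F_i \ (\alpha_i - \lambda_i \ dF'_i).$$
Defining $\gamma_i := \alpha_i - \lambda_i \ dF'_i \in \Omega^1_n(d_i)$ gives the desired decomposition $\alpha = \bar \alpha + \sum_i \hat F_i \ \gamma_i$. Finally, the membership $\sum_i \hat F_i \ \gamma_i \in T(\omega)$ is automatic: by Remark \ref{remark3} we have $\bar \alpha \in \im d\mu(\lam, \F) \subset T(\omega)$, while $\alpha \in T(\omega)$ by assumption, and $T(\omega)$ is a linear subspace, so their difference also lies in $T(\omega)$.

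\textbf{Main obstacle.} There is essentially none: the genuine work---extracting the factors $F'_j$ from the $A_{ij}$ on $X_{ij}$ via Proposition \ref{proposition8} and Lemma \ref{lema}---has already been carried out. This corollary simply repackages that output by absorbing the ``diagonal'' contribution $\sum_i \lambda_i \hat F_i \ dF'_i$ into $\bar \alpha$ and collecting the remaining $\hat F_i$-divisible piece. The real work still to come is to analyze the residual term $\sum_i \hat F_i \gamma_i$ and show that, when it lies in $T(\omega)$, it can also be realized as $d\mu(\lam, \F)(\lam'', 0)$ for a suitable $\lam'' \in \Lambda(\d)$; that step will presumably exploit Proposition \ref{expression} b) applied to $\sum_i \hat F_i \gamma_i$ together with the integrability condition $d\omega \wedge d\alpha = 0$ from Proposition \ref{proposition beta}.
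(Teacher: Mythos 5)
Your proof is correct and coincides with the paper's: both apply Corollary \ref{any alpha 2}, add and subtract $\sum_{i} \lambda_i \hat F_i \, dF'_i$ to recognize the first two sums as $\bar\alpha = d\mu(\lam,\F)(0,\F')$, set $\gamma_i = \alpha_i - \lambda_i \, dF'_i$, and deduce $\sum_i \hat F_i \gamma_i \in T(\omega)$ from the linearity of $T(\omega)$.
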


\begin{proof} Using Corollary \ref{any alpha 2}, then adding and substracting  $\sum_{i} \lambda_i  \ \hat{F}_i  \ dF'_i$, we have:
\begin{eqnarray*} 
\alpha &=&  \sum_{i \ne j} \lambda_i \ \hat {F}_{ij} \ F'_j \ dF_i  +  \sum_{i}  \hat F_i \  \alpha_i  \\
 &=& \sum_{i \ne j} \lambda_i \ \hat {F}_{ij} \ F'_j \ dF_i  + \sum_{i} \lambda_i  \ \hat{F}_i  \ dF'_i  +
  \sum_{i}  \hat F_i \  (\alpha_i  - \lambda_i   \ dF'_i)  \\
 &=& d\mu(\lam, \F)(0, \F') + \sum_{i}  \hat F_i \  \gamma_i
\end{eqnarray*} 
taking $\gamma_i = \alpha_i  - \lambda_i   \ dF'_i$. Since $\alpha, \bar \alpha \in T(\omega)$,
we have
$\alpha - \bar \alpha =   \sum_{i}  \hat F_i  \gamma_i \in T(\omega)$, as claimed.
\end{proof}

\medskip

\begin{remark} \label{remark5}  Corollary \ref{any alpha 3} implies that to prove Theorem \ref{main}  we are reduced to showing that any
$\alpha \in T(\omega)$ of the form $\alpha= \sum_i  \  \hat F_i  \gamma_i$,
with $\gamma_i \in  \Omega^1_n(d_i)$, belongs to the image of $d\mu(\lam, \F)$. 
\end{remark}

\medskip

\noindent
To this end, let us first prove the following 
\medskip

\begin{proposition} \label{proposition9} 
Let $\alpha \in T(\omega)$ be of the form 
\begin{equation}
\alpha= \sum_j  (\hat F_j)^e \ \gamma_j \label{alpha simple}
\end{equation}
with $e \in \N, e \ge 1$, and $\gamma_j \in  \Omega^1_n(d - e \hat d_j)$. Then, for $1 \le i, j \le m$, $i \ne j$, 
there exist $\lambda'_{j} \in \C$, $D_{ij} \in S_n(d_j - e \hat d_j)$ and $\epsilon_j \in \Omega^1_n(d_j - e \hat d_j)$,  such that
$$\gamma_j  = \lambda'_j  \ dF_j + \sum_{i \ne j}  \hat F_{ij} \ D_{ij} \ dF_i + \hat F_j \ \epsilon_j$$
for $j = 1, \dots, m$. In case $e \ge 2$, all  $\lambda'_j = 0$.
\end{proposition}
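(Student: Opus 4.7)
The plan is to exploit the integrability hypothesis $\alpha \in T(\omega)$ by restricting the equation $\omega \wedge d\alpha + \alpha \wedge d\omega = 0$ successively to the strata $X_r \supset X_{rs} \supset X_{rst}$ of the normal crossings divisor $X^{(1)}$, thereby pinning down the local structure of $\gamma_r$, and then gluing these pieces into a global decomposition via the ideal-theoretic machinery already developed in Proposition \ref{proposition3} and Lemma \ref{lema}.

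Since $\hat F_j = F_r\hat F_{jr}$ for $j \ne r$, we have $(\hat F_j)^e|_{X_r} = 0$ for all $e \ge 1$, so only $\alpha|_{X_r} = (\hat F_r)^e\,\gamma_r|_{X_r}$ survives. Using $\omega|_{X_r} = \lambda_r\hat F_r\,dF_r|_{X_r}$ and $d\omega|_{X_r} = dF_r \wedge \zeta$ with $\zeta := \sum_{i \ne r}(\lambda_i - \lambda_r)\hat F_{ir}\,dF_i$ (from Proposition \ref{proposition1.5} a)), the integrability relation on $X_r$, after cancellation of the generically non-zero factor $(\hat F_r)^e$, becomes
\[ dF_r \wedge \bigl[e\lambda_r\,d\hat F_r \wedge \gamma_r + \lambda_r\hat F_r\,d\gamma_r + \zeta \wedge \gamma_r\bigr]\big|_{X_r} = 0. \]
The crucial observation is that all cross-terms of $d\alpha$ involving $d\gamma_j$ for $j \ne r$ are annihilated by $dF_r \wedge dF_r = 0$ and so do not appear. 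Restricting further to $X_{rs}$ yields $\hat F_r|_{X_{rs}} = 0$, $d\hat F_r|_{X_{rs}} = \hat F_{rs}\,dF_s$ and $\zeta|_{X_{rs}} = (\lambda_s - \lambda_r)\hat F_{rs}\,dF_s$, so the identity collapses to
\[ \hat F_{rs}\bigl((e-1)\lambda_r + \lambda_s\bigr)\,dF_r \wedge dF_s \wedge \gamma_r\big|_{X_{rs}} = 0. \]
For generic $\lam$ the scalar is non-zero and $\hat F_{rs}|_{X_{rs}} \ne 0$ by normal crossings, so $\gamma_r|_{X_{rs}} = A_{rs}\,dF_r + B_{rs}\,dF_s$ with $A_{rs} \in H^0(X_{rs},\cO((1-e)\hat d_r))$: a constant $\lambda'_r \in \C$ when $e = 1$, and necessarily $0$ when $e \ge 2$ since the degree $(1-e)\hat d_r$ is then strictly negative.

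Restricting once more to $X_{rst}$ and using linear independence of $dF_r, dF_s, dF_t$ forces $A_{rs} = A_{rt}$ (so $\lambda'_r$ is independent of $s$) and $B_{rs}|_{X_{rst}} = 0$ for all $t \ne r,s$. Hence $B_{rs}$ vanishes on $\bigcup_{t \ne r,s} X_{rst} \subset X_{rs}$, whose ideal in $\cO_{X_{rs}}$ is principal and generated by $\hat F_{rs}|_{X_{rs}}$ (the $X_{rst}$ are smooth prime divisors of $X_{rs}$ cut out by $F_t$). Writing $B_{rs} = \hat F_{rs}\,D'_{sr}$ on $X_{rs}$ and lifting $D'_{sr}$ through the restriction map $S_n(d_r - e\hat d_r) \to H^0(X_{rs},\cO(d_r - e\hat d_r))$, which is surjective by Lemma \ref{lema} or equivalently by the Koszul resolution of the complete intersection $X_{rs}$ combined with Serre vanishing (where $n \ge 3$ is essential), yields $D_{sr} \in S_n$ of the required degree.

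Finally, set $\tilde\gamma_r := \lambda'_r\,dF_r + \sum_{i \ne r}\hat F_{ir}\,D_{ir}\,dF_i$. By construction $\tilde\gamma_r|_{X_{rs}} = \gamma_r|_{X_{rs}}$ for every $s \ne r$, so $\gamma_r - \tilde\gamma_r$ vanishes on $\bigcup_{s \ne r} X_{rs}$, whose saturated ideal in $S_n$ is $(F_r,\hat F_r)$ (by the argument of Proposition \ref{proposition3} a) applied to the pair $F_r,\hat F_r$). Thus $\gamma_r - \tilde\gamma_r = F_r\,\mu + \hat F_r\,\epsilon_r$ with $\mu \in \Omega^1_n$ of degree $(1-e)\hat d_r \le 0$, forcing $\mu = 0$ and yielding the claimed decomposition. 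The main technical hurdle is the first step: carrying out the restriction of $\omega \wedge d\alpha + \alpha \wedge d\omega$ to $X_r$ with the correct sign- and degree-bookkeeping, and in particular verifying that the $dF_r \wedge dF_r$ cancellation kills every cross-term $j \ne r$. Once this reduction is in place, the remaining linear-algebraic and ideal-theoretic steps follow the template of Propositions \ref{proposition7} and \ref{proposition8}.
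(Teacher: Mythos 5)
Your proof is correct and follows essentially the same route as the paper's: restrict the integrability condition to $X_r$, then to $X_{rs}$ and $X_{rst}$, use the normal crossings hypothesis to pin down $\gamma_r|_{X_{rs}}$ and the divisibility of the $dF_s$-coefficient by $\hat F_{rs}$, and glue via the (saturated) ideal of $\bigcup_{s\ne r}X_{rs}$. The only real difference is bookkeeping: the paper reuses the $\beta=\alpha/F$ computation of Proposition \ref{proposition8} (equations \ref{BB} and \ref{AA}) with $A_{ij}=0$ and $\alpha_j=(\hat F_j)^{e-1}\gamma_j$, whereas you restrict $\omega\wedge d\alpha+\alpha\wedge d\omega$ directly to $X_r$ and cancel $(\hat F_r)^e$ there first, which actually makes the relevant coefficient $(e-1)\lambda_r+\lambda_s$ for general $e\ge 2$ more transparent than the paper's appeal to $\lambda_j\ne 0$.
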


\begin{proof}
Let us use once more that $\alpha$ satisfies  \ref{tangents}
$\omega \wedge d\alpha +  \alpha  \wedge d\omega  = 0. $ 
We may apply to our present $\alpha$ the calculation
in the Proof of Proposition \ref{proposition8}, with $A_{ij} = 0$ and $\alpha_j = (\hat F_j)^{e-1} \ \gamma_j$, for all $i, j$. Then it follows from equation \ref{AA} that
$$\gamma_j \wedge dF_i \wedge dF_j \ = \ 0  \ \ \mathrm{\ \ on \ } X_{ij}, \ \ \mathrm{for \ all \ \ }    i \ne j,  \label{AAA}$$
since $\lambda_j \ne 0$, and $\hat F_{ij} \ne 0$ on $X_{ij}$. Then,
$$\gamma_j = B_{ij} dF_i + C_{ij} dF_j  \ \ \mathrm{\ \ on \ } X_{ij}$$
for some $B_{ij} \in S_n(d - e  \hat  d_j - d_i) $ and $C_{ij} \in S_n((1-e)  \hat  d_j )$. 
Notice that $C_{ij} \in S_n(0) = \C$ if $e=1$, and $C_{ij} = 0$ if $e \ge 2$, since $(1-e)  \hat  d_j  < 0$.

\noindent
Now we fix $j$ and vary $i \ne j$. On $X_{ij} \cap X_{kj} = X_{ijk}$ we  have
$B_{ij} dF_i + C_{ij} dF_j  = B_{kj} dF_k + C_{kj} dF_j$. From the normal crossings hypothesis we obtain,  for all $i \ne k$:

a) $B_{ij} = B_{kj} = 0$ on $X_{ijk}$, and  

b) $C_{ij} = C_{kj}$ 

\noindent
From b),  $C_{ij}$ does not depend on $i$ and we may denote $C_{ij} = \lambda'_j$. As noticed above, $C_{ij} = \lambda'_j = 0$ in case $e \ge 2$.

\noindent
On the other hand, a) implies
that 
$B_{ij} = \hat F_{ij} D_{ij}$ on $X_{ij}$
for some $D_{ij} \in  S_n(d_j - e \hat d_j)$. Therefore,
$$\gamma_j = \lambda'_j dF_j +  \hat F_{ij} D_{ij} dF_i  \ \ \mathrm{\ \ on \ } X_{ij}$$
for all $j$ and all $i \ne j$. Let $\gamma'_j = \gamma_j - (\lambda'_j dF_j + \sum_{i \ne j} \hat F_{ij} D_{ij}  dF_i) \in \Omega^1_n(d - e \hat d_j)$.
Then $\gamma'_j$ is zero on $D_j = \cup_{i \ne j} X_{ij} \subset X_j$, hence there exists $\epsilon_j \in \Omega^1_n(d_j - e \hat d_j)$
such that $\gamma'_j = \hat F_j \ \epsilon_j$ on $X_j$. Denoting  $J_j \cong \cO(-d_j)$ the ideal sheaf of $X_j$, we have
$H^0(\P^n, \Omega^1_{\P^n}(d_j)(J_j)) \cong H^0(\P^n, \Omega^1_{\P^n}) = 0$. Therefore
the equality $\gamma'_j = \hat F_j \ \epsilon_j$ holds in $\P^n$, and this implies our claim.

\end{proof}

\medskip
\medskip

\begin{corollary} \label{divisible} 
If $\alpha \in T(\omega)$ is divisible by $(\hat F_1)^e$, that is, $\alpha= (\hat F_1)^e \ \gamma_1$ for some
$\gamma_1 \in  \Omega^1_n(d - e \hat d_1)$, then  there exist
$\lambda'_{1} \in \C$, $D_{i} \in S_n(d_1 - e \hat d_1)$, for $i > 1$, and $\epsilon_1 \in \Omega^1_n(d_1 - e \hat d_1)$,  such that
$$\alpha = (\hat F_1)^e (\lambda'_1  \ dF_1 + \sum_{i > 1}  \hat F_{i1} \ D_{i} \ dF_i  \ + \ \hat F_1 \ \epsilon_1).$$
In case $e \ge 2$,  $\lambda'_1 = 0$.
\end{corollary}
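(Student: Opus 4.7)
The plan is to obtain Corollary \ref{divisible} as an immediate specialization of Proposition \ref{proposition9}, by observing that a form divisible by $(\hat F_1)^e$ alone can be trivially rewritten as a sum of the type $\sum_j (\hat F_j)^e \gamma_j$ in which only the $j = 1$ summand is nonzero.

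Concretely, I would set $\gamma_j := 0$ for $j = 2, \dots, m$, so that
$$\alpha \;=\; \sum_{j=1}^m (\hat F_j)^e \, \gamma_j \;=\; (\hat F_1)^e \, \gamma_1,$$
recovering the given form. Since $\alpha$ belongs to $T(\omega)$ by hypothesis and the degrees match, the hypothesis of Proposition \ref{proposition9} is satisfied with this choice. Applying that proposition and reading off its conclusion for the single index $j=1$ produces constants $\lambda'_1 \in \C$, polynomials $D_{i1} \in S_n(d_1 - e\hat d_1)$ for each $i \neq 1$, and a form $\epsilon_1 \in \Omega^1_n(d_1 - e\hat d_1)$ satisfying
$$\gamma_1 \;=\; \lambda'_1 \, dF_1 \;+\; \sum_{i>1} \hat F_{i1} \, D_{i1} \, dF_i \;+\; \hat F_1 \, \epsilon_1.$$

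Multiplying this identity by $(\hat F_1)^e$ and relabelling $D_i := D_{i1}$ yields precisely the displayed formula of the corollary. The additional assertion that $\lambda'_1 = 0$ when $e \ge 2$ is inherited verbatim from the last sentence of Proposition \ref{proposition9}. There is no genuine obstacle here beyond recognizing that the hypothesis of Proposition \ref{proposition9} tolerates vanishing summands, so the corollary is a one-line deduction.
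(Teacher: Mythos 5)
Your proposal is correct and coincides with the paper's own argument: the authors likewise deduce the corollary by applying Proposition \ref{proposition9} with $\gamma_j = 0$ for $j > 1$ and reading off the $j=1$ conclusion. Nothing further is needed.
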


\begin{proof}
It follows immediately from Proposition \ref{proposition9} applied to the case $\gamma_j = 0$ for $j > 1$.
\end{proof}

\medskip
\medskip

\subsection{End of the proof: balanced case.}    \label{balanced}

\begin{definition} \label{balanced}
Let $\d = (m; d_1, \dots, d_m) \in P(m, d)$. We say that $\d$ is \emph{balanced} if $d_i  <   \sum_{j \ne i}  d_j = \hat d_i$ for all $i = 1, \dots, m$. 
Equivalently, if $2 d_i < d$ for all $i$.
\end{definition}

\noindent
Notice that if $\d$ is not balanced then there exists a \emph{unique} $i$ such that $2 d_i \ge d$. 
Since we normalized $\d$ so that $d_1 \ge d_2 \ge \dots \ge d_m$ (see Definition \ref{definition0}), it follows that
$\d$ is balanced if and only if $2 d_1 < d$.

\medskip
\medskip

\begin{theorem} \label{prop balanced}  Suppose $\d \in P(m, d)$ is balanced. Let $(\lam, \F) \in V_n(\d)$ be general and $\omega = \mu(\lam, \F)$.
Then, for any $\alpha \in T(\omega)$ such that  $\alpha= \sum_i  \hat F_i  \ \gamma_i \label{alpha simple}$,
with $\gamma_i \in  \Omega^1_n(d_i)$, there exists $\lam' = (\lambda'_1, \dots, \lambda'_m) \in \C^m$, with $\sum_{i=1}^m d_i \lambda'_i = 0$, such that
$$\alpha = \sum_{i=1}^m \lambda'_i   \ \hat F_i  \  dF_i.$$ 
 In particular, 
$$\alpha = d\mu(\lam, \F)(\lam', 0)$$
 belongs to the image of $d\mu(\lam, \F)$. 
 \end{theorem}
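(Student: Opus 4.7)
The plan is to apply Proposition \ref{proposition9} with $e = 1$ to the form $\alpha = \sum_i \hat F_i \gamma_i$ and then exploit the balanced hypothesis to collapse the resulting decomposition to its first term.

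First, I would invoke Proposition \ref{proposition9} with $e = 1$: since $\gamma_j \in \Omega^1_n(d_j) = \Omega^1_n(d - \hat d_j)$, the hypothesis is met, and we obtain, for each $j = 1, \dots, m$, a constant $\lambda'_j \in \C$, polynomials $D_{ij} \in S_n(d_j - \hat d_j)$ and a one-form $\epsilon_j \in \Omega^1_n(d_j - \hat d_j)$ such that
$$\gamma_j = \lambda'_j \ dF_j + \sum_{i \ne j} \hat F_{ij} \ D_{ij} \ dF_i + \hat F_j \ \epsilon_j.$$

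The crucial step is purely a degree count: balancedness of $\d$ means $d_j < \hat d_j$ for every $j$, so $d_j - \hat d_j < 0$. Consequently both $S_n(d_j - \hat d_j)$ and $\Omega^1_n(d_j - \hat d_j)$ are zero, forcing $D_{ij} = 0$ and $\epsilon_j = 0$ automatically. We are left with $\gamma_j = \lambda'_j \ dF_j$ for each $j$, and summing produces
$$\alpha = \sum_{j=1}^m \hat F_j \ \gamma_j = \sum_{j=1}^m \lambda'_j \ \hat F_j \ dF_j.$$

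Finally, to pin down the relation $\sum_j d_j \lambda'_j = 0$ and verify that $\alpha = d\mu(\lam, \F)(\lam', 0)$ in the sense of \ref{derivative}, I would contract with the radial vector field $R$. By Euler's formula, $\langle R, dF_j \rangle = d_j F_j$, so $\langle R, \alpha \rangle = \left(\sum_j d_j \lambda'_j\right) F$; since $\alpha$ is projective and $F \ne 0$, this gives $\sum_j d_j \lambda'_j = 0$, i.e.\ $\lam' \in \Lambda(\d)$. Comparing with the formula \ref{derivative} specialized to $\F' = 0$ then identifies $\alpha$ with $d\mu(\lam, \F)(\lam', 0)$.

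There is essentially no serious obstacle in the balanced setting: Proposition \ref{proposition9} already does all of the geometric work, and balancedness has been arranged precisely so that the potentially troublesome ``error terms'' $D_{ij}$ and $\epsilon_j$ sit in negatively graded pieces and therefore vanish for free. The real difficulty, to be addressed in the non-balanced subsection, is that when some $d_i \geq \hat d_i$ these terms genuinely survive and must be absorbed into the image of $d\mu$ by a more delicate argument.
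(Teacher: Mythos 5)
Your proposal is correct and follows essentially the same route as the paper: apply Proposition \ref{proposition9} with $e=1$ and observe that balancedness forces $D_{ij}$ and $\epsilon_j$ to lie in negatively graded (hence zero) pieces, leaving $\gamma_j = \lambda'_j\, dF_j$. Your additional verification that $\sum_j d_j \lambda'_j = 0$ by contracting with the radial vector field is a detail the paper leaves implicit, and it is a welcome completion of the argument.
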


\begin{proof}
We apply Proposition \ref{proposition9} with $e=1$. Since $\d$ is balanced, $d_j - \hat d_j < 0$ for all $j$ and then 
$D_{ij} =0$ and $\epsilon_j = 0$ for all $i, j$. Hence $\gamma_j = \lambda'_j \ dF_j$ for all $j$, as claimed.
\end{proof}

\noindent
It follows from Remark \ref{remark5} that the proof of Theorem \ref{main} is now complete, if $\d$ is balanced.

\subsection{End of the proof: general case.}    \label{balanced}

\medskip

\medskip
\noindent
When $\d$ is not balanced, Theorem \ref{prop balanced} is not true; we may have an $\alpha \in T(\omega)$ such that  $\alpha|_{X^{(2)}} = 0$ but $\alpha$ is not logarithmic as in 
Theorem \ref{prop balanced}. For example, take $F'_1 = G_1 \ \hat F_1$ where $G_1$ is any homogeneous polynomial of degree
$d_1 - \hat d_1 > 0$, and $F'_j = 0$ for $j > 1$. Then $\alpha = d\mu(\lam, \F)(0, F')$ satisfies this condition, as it easily follows from  
\ref{derivative}. Notice that this $\alpha$ is divisible by $\hat F_1$. 

\medskip

\noindent
In Theorem \ref{prop balanced 2} we will see that
any $\alpha \in T(\omega)$ such that  $\alpha|_{X^{(2)}} = 0$ may be written in a special form that still implies it belongs to
the image of $d\mu(\lam, \F)$.

\begin{definition} \label{unbalanced}
Let $\d \in P(m, d)$. We define 
$$r(\d) = \mathrm{max} \ \{e \in \N/  \ d_1  \ge   e \  \hat d_1\} = [{d_1}/{ \hat d_1}]$$
the integer part of ${d_1}/{ \hat d_1}$.
\end{definition}

\noindent
Notice that $\d$ is balanced when $r(\d)=0$.

\medskip
\medskip

\begin{theorem} \label{prop balanced 2}  Fix $\d \in P(m, d)$. Let $(\lam, \F) \in V_n(\d)$ be general and $\omega = \mu(\lam, \F)$.
Then, any $\alpha \in T(\omega)$ such that   $\alpha= \sum_i  \  \hat F_i  \gamma_i$,
with $\gamma_i \in  \Omega^1_n(d_i)$, may be written as
$$\alpha = d\mu(\lam, \F)(\lam', \F')$$ 
where  $\lam' \in \C^m$ is such that $\sum_{i=1}^m d_i \lambda'_i = 0$,
$F'_j = 0$ for $j > 1$, and  
$$F'_1 = \sum_{e=1}^{r(\d)} \ G_e \ \hat F_1^{\ e}$$
where $G_e$ are homogeneous polynomials of respective degrees $d_1 - e \hat d_1$,  for $e = 1, \dots, r(\d)$.

\end{theorem}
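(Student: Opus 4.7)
The strategy is a descending induction: we peel off one factor of $\hat F_1$ at a time from $\alpha$, at each stage removing a tangent vector that lies in the image of $d\mu(\lam, \F)$. First, apply Proposition \ref{proposition9} with $e=1$ to $\alpha = \sum_i \hat F_i \gamma_i$; for $j>1$ the parameter $d_j - \hat d_j$ is strictly negative (by the normalization $d_1 \ge d_j$ together with the fact that $\hat d_j$ always contains the summand $d_1$, so $d_j < \hat d_j$ in all cases covered here), whence $\gamma_j = \lambda'_j dF_j$. Subtracting the logarithmic tangent vector $d\mu(\lam, \F)(\lam', 0) = \sum_i \lambda'_i \hat F_i dF_i$ produces $\alpha_1 := \alpha - d\mu(\lam, \F)(\lam', 0) \in T(\omega)$, which is now divisible by $\hat F_1$.

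The inductive step is the heart of the argument. Assume $\alpha_e = \hat F_1^e \beta_e \in T(\omega)$ has been constructed for some $1 \le e \le r(\d)$. Applying Proposition \ref{proposition9} to $\alpha_e$, viewed as $(\hat F_1)^e \gamma_1$, we obtain
$$\beta_e = \sum_{i>1} \hat F_{i1}\, D^{(e)}_i\, dF_i + \hat F_1\, \epsilon^{(e)},$$
with $D^{(e)}_i \in S_n(d_1 - e\hat d_1)$ and $\epsilon^{(e)} \in \Omega^1_n(d_1 - e\hat d_1)$ (the $\lambda'_1 dF_1$ term is absent: for $e\ge 2$ by the proposition, for $e=1$ by the construction of $\alpha_1$). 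A direct calculation from formula \ref{derivative} yields, for any $G_e \in S_n(d_1 - e\hat d_1)$,
$$d\mu(\lam, \F)(0, (G_e \hat F_1^e, 0, \ldots, 0)) = \hat F_1^e \sum_{i>1}(\lambda_i + e\lambda_1)\, G_e\, \hat F_{i1}\, dF_i + \lambda_1\, \hat F_1^{e+1}\, dG_e.$$
Setting $\alpha_{e+1} := \alpha_e - d\mu(\lam, \F)(0, (G_e \hat F_1^e, 0, \ldots, 0))$, divisibility of $\alpha_{e+1}$ by $\hat F_1^{e+1}$ reduces, after restriction to each $X_i$ with $i>1$, to the congruences
$$D^{(e)}_i \equiv (\lambda_i + e\lambda_1)\, G_e \pmod{F_i}, \quad i=2,\ldots,m.$$

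The main obstacle is to produce a single $G_e \in S_n(d_1 - e\hat d_1)$ satisfying these $m-1$ congruences simultaneously. Following the pattern of Proposition \ref{proposition8}, we use the full integrability $\alpha_e \in T(\omega)$ restricted to the triple intersections $X_{1ij}$ (with $i,j > 1$, $i \ne j$) to force the ratios $D^{(e)}_i/(\lambda_i + e\lambda_1)$ to coincide on each $X_{1ij}$; here we use that for general $\lam$ the scalars $\lambda_i + e\lambda_1$ are non-zero. The resulting compatible data define a section of $\cO(d_1 - e\hat d_1)$ on the reducible divisor $D_1 := \cup_{i>1} X_{1i} \subset X_1$, which Lemma \ref{lema} extends to a section on $X_1$, and a standard cohomology vanishing $H^1(\P^n, \cO(d_1 - e\hat d_1 - d_1)) = 0$ lifts it to a polynomial $G_e \in S_n(d_1 - e\hat d_1)$.

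Iterating for $e = 1, 2, \ldots, r(\d)$ produces $F'_1 = \sum_{e=1}^{r(\d)} G_e\, \hat F_1^e$ and a residue $\alpha_{r(\d)+1}$ divisible by $\hat F_1^{r(\d)+1}$. Writing $\alpha_{r(\d)+1} = \hat F_1^{r(\d)+1} \beta$ and applying Proposition \ref{proposition9} once more, the polynomial constituents of $\beta$ lie in $S_n(d_1 - (r(\d)+1)\hat d_1)$, which vanishes by the very definition of $r(\d)$. Hence $\alpha_{r(\d)+1} = 0$, and summing all the subtractions yields $\alpha = d\mu(\lam, \F)(\lam', \F')$ with $F'_j = 0$ for $j > 1$ and $F'_1$ of the claimed form.
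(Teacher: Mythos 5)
Your argument is correct and is essentially the paper's own proof: the same initial application of Proposition \ref{proposition9} with $e=1$ to isolate the $\lam'$-part and reduce to an $\alpha$ divisible by $\hat F_1$, the same iterative step subtracting $d\mu(\lam,\F)(0,(G_e\hat F_1^{\,e},0,\dots,0))$ to raise the power of $\hat F_1$, the same use of the integrability relation on the triple intersections $X_{1st}$ to make the $D^{(e)}_i/(\lambda_i+e\lambda_1)$ glue via Lemma \ref{lema}, and the same termination by degree reasons at $e=r(\d)+1$. The only (shared) fine point is the assertion that $d_j-\hat d_j<0$ for $j>1$, which the paper also invokes; otherwise your write-up matches the paper step for step.
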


\begin{proof} 
By Proposition \ref{proposition9} with $e=1$,
\begin{equation}
\alpha= \sum_j \lambda'_j \  \hat F_j  \ dF_j   +    \sum_{i \ne j}  \hat F_{ij} \ \hat F_j D_{ij} \ dF_i   +  
 \sum_j  \hat F_j\  \hat F_j  \epsilon_j.   \label{alpha simple 2}
\end{equation}
In the current unbalanced case, $d_1 - \hat d_1 \ge 0$ and $d_i - \hat d_i <  0$ for $i > 1$, 
as in Definition \ref{balanced}. Hence $D_{ij} = 0$ and $\epsilon_j = 0$ for $j > 1$. Also, since 
$\sum_j \lambda'_j \  \hat F_j  \ dF_j  = d\mu(\lam, \F)(\lam', 0)$, it is enough to consider
\begin{equation}
\alpha = \alpha^{(1)} =  \sum_{i > 1}  \hat F_{i1} \ \hat F_1 D_{i1} \ dF_i   +   \hat F_1\  \hat F_1  \epsilon_1 =
 \hat F_1 \  ( \sum_{i > 1}  \hat F_{i1} \ D_{i1} \ dF_i   +   \hat F_1\  \epsilon_1) \label{alpha 1}
 \end{equation}
 which is divisible by $\hat F_1$ (the last term is actually divisible by $\hat F_1^{\ 2}$). 
 
 \noindent
 What we shall do is to express $\alpha^{(1)} $ as the sum of
 an element of the image of $d\mu(\lam, \F)$ (of the claimed shape) plus an $\alpha^{(2)} \in T(\omega)$
 divisible by $\hat F_1^{\ 2}$. Next we repeat the argument and express $\alpha^{(2)}$ as the sum
 of another element of the image of $d\mu(\lam, \F)$  plus an $\alpha^{(3)} \in T(\omega)$
 divisible by $\hat F_1^{\ 3}$. After at most $r(\d)$ iterations this process ends, since 
 $\alpha^{(r(\d) + 1)} = 0$ by degree reason, and hence we obtain the claimed expression for the original $\alpha$.
 
 \noindent
 The essential step is to pass from $\alpha^{(e)}$  to $\alpha^{(e+1)}$, for $1 \le e \le r(\d)$. 
 
 \noindent
 To carry out this step, let us assume 
 that $\alpha$  is divisible by $\hat F_1^{\ e}$, that is,
\begin{equation}
\alpha = \alpha^{(e)} = \hat F_1^{\ e} \  ( \sum_{i > 1}  \hat F_{i1} \ D_{i1} \ dF_i   +   \hat F_1\  \epsilon_1).  \label{alpha e}
\end{equation}
as in Corollary \ref{divisible}.

 \noindent
Now we apply to $\alpha$ the calculation in the Proof of Proposition \ref{proposition8}
with 
$$A_{ij} = \hat F_1^{\ e}  D_{ij}, \ \ \alpha_j =  \hat F_1^{\ e}  \epsilon_j,$$ 
that is:
$$A_{i1} = \hat F_1^{\ e}  D_{i1} \  \mathrm{\ for \ } i>1, \ \ \alpha_1 =  \hat F_1^{\ e}  \epsilon_1,$$
$$A_{ij} = 0 ,  \ \ \alpha_j = 0 \ \  \mathrm{\ for \ } j > 1.$$
From equation \ref{BB} with $r=1$ we get

\begin{eqnarray} 
\hat {F}_1 \ ( \sum_{i \ne 1}  \lambda_1 \ \hat {F}_{i1}  \ d(\hat F_1^{\ e}  D_{i1})  \wedge dF_i  \wedge dF_1 + 
\sum_{i \ne k \ne 1}  \lambda_k  \  \hat {F}_{i1k} \ \hat F_1^{\ e}  D_{i1}  \ dF_i \wedge dF_1  \wedge  dF_k + \nonumber \\
\lambda_1   \ \hat {F}_{1}   \  d( \hat F_1^{\ e}  \epsilon_1) \wedge dF_1 + 
 \sum_{k \ne 1}  \lambda_k  \  \hat {F}_{1k} \  \hat F_1^{\ e}  \epsilon_1 \wedge dF_1 \wedge dF_k) \ = \ 0 \label{BB2}
\end{eqnarray}  
We have $d(\hat F_1^{\ e}  D_{i1}) = e  \hat F_1^{\ e-1}  D_{i1} d\hat F_1 + \hat F_1^{\ e}  dD_{i1}$. 
Also, $d\hat F_1 \wedge dF_i =  (\sum_{j \ne 1} \hat F_{j1} dF_j) \wedge dF_i = \sum_{j \ne 1, j \ne i} \hat F_{j1} dF_j \wedge dF_i $, so that
$ \hat {F}_{i1}  d\hat F_1 \wedge dF_i =  \sum_{j \ne 1, j \ne i}  \hat {F}_{i1}  \hat F_{j1} dF_j \wedge dF_i =
 \hat {F}_{1} \ \sum_{j \ne 1, j \ne i}   \hat F_{ij1} dF_j \wedge dF_i$. Replacing these into \ref{BB2}, we obtain, on $X_1$:

 \begin{eqnarray}
 \hat F_1^{\ e+1} (\sum_{i \ne j \ne 1}  e  \lambda_1 \hat {F}_{ij1}  D_{i1} \ dF_j  \wedge dF_i  \wedge dF_1 + 
 \sum_{i \ne 1}  \lambda_1  \hat {F}_{i1} \  dD_{i1} \wedge dF_i \wedge dF_1 +  \nonumber \\
\sum_{i \ne j \ne 1}   \lambda_j   \hat {F}_{ij1}  D_{i1}  \ dF_i \wedge dF_1  \wedge  dF_j +
e  \lambda_1   \ d\hat {F}_{1} \wedge  \epsilon_1 \wedge dF_1 +  \lambda_1   \hat {F}_{1} \ d\epsilon_1 \wedge dF_1 +  \nonumber \\
 \sum_{i \ne 1}  \lambda_i    \hat {F}_{1i} \   \epsilon_1 \wedge dF_1 \wedge dF_i) \ = \ 0 \label{BB3}
 \end{eqnarray}  
 
 \noindent
 Now we cancel the factor $\hat F_1^{\ e+1}$ on $X_1$ and then restrict to $X_{1st}$ for $1, s, t$ distinct. After straightforward calculation
 we obtain, on $X_{1st}$:
 $$(e \lambda_1 + \lambda_s) D_{t1}  = (e \lambda_1 + \lambda_t) D_{s1}$$
 Then the collection $\{ D_{s1} / (e \lambda_1 + \lambda_s) \in S_n(d_1 - e \hat d_1)\}_{s \ne 1}$ defines a 
 section of $\cO(d_1 - e \hat d_1)$ on $\cup_{s \ne 1} X_{1s} \subset X_1$. Hence, there exists $G_e \in S_n(d_1 - e \hat d_1)$
 such that 
 $$D_{s1} = (e \lambda_1 + \lambda_s)  G_e$$
 on $X_{1s}$ for all $s \ne 1$. Then, with the notation of \ref{alpha e},
 $$\sum_{i > 1}  \hat F_{i1} \ D_{i1} \ dF_i   \ + \   \hat F_1\  \epsilon_1 \  - \  \sum_{i > 1}  \hat F_{i1} \ (e \lambda_1 + \lambda_i)  G_e \ dF_i \ = \ 0$$
 on  $\cup_{s \ne 1} X_{1s} \subset X_1$, and hence is divisible by $\hat F_1$. We obtain
 \begin{equation}
 \alpha =  \hat F_1^{\ e} \ \sum_{i > 1}  \hat F_{i1} \ (e \lambda_1 + \lambda_i)  G_e \ dF_i  \ +  \  \hat F_1^{\ e+1} \  \bar \epsilon_1 \label{alpha e+1}
 \end{equation}
 for some $\bar \epsilon_1 \in  \Omega^1_n(d_1 - e \hat d_1)$.
 
 \noindent
Denote $\F' = (\hat F_1^{\ e} \ G_e, 0, \dots, 0)$.  Combining  \ref{alpha e+1} with 
$$d\mu(\lam, \F)(0, \F') = \sum_{i > 1}  \lambda_i \ F_1^{\ e} \ G_e \  \hat F_{i1}  \ dF_i  + \lambda_1 \hat F_1 d(\hat F_1^{\ e} G_e)$$ 
(see \ref{derivative}),
one immediately obtains
$$\alpha = d\mu(\lam, \F)(0, \F') + \alpha^{(e+1)}$$
with $\alpha^{(e+1)} = \hat F_1^{\ e+1}  \ (\bar \epsilon_1 - \lambda_1 dG_e)$. 
Now, $\alpha^{(e+1)} \in T(\omega)$ because $\alpha$ and $d\mu(\lam, \F)(0, \F')$ belong to $T(\omega)$. 
Since $\alpha^{(e+1)}$ is divisible by $\hat F_1^{\ e+1}$, by Corollary \ref{divisible}, it may be written as in \ref{alpha e} with exponent $e+1$.
Hence we may apply again the previous procedure to  $\alpha^{(e+1)}$.
This proves the essential iterative step and implies our statement.
\end{proof}

\medskip
\medskip

\noindent
It follows from Remark \ref{remark5} that the proof of Theorem \ref{main} is now complete, for any $\d$.

\medskip
\medskip

\newpage

\vspace {8 mm}

\begin{flushleft}
\begin{small}
  Universidad de Buenos Aires and CONICET. \newline
  Departamento de Matem\'{a}tica, FCEN. \newline
  Ciudad Universitaria.\newline
  (1428) Buenos Aires.\newline
  ARGENTINA.

\medskip

\medskip

\noindent
 Fernando Cukierman, fcukier{@}dm.uba.ar
 
 \noindent
 Javier Gargiulo Acea, jngargiulo{@}gmail.com 

\noindent
C\'esar Massri, cmassri{@}dm.uba.ar

\end{small}
\end{flushleft}

\end{document}